\documentclass[11pt,a4paper,leqno]{amsart}
\usepackage{a4wide}
\setlength{\textheight}{23cm}
\setlength{\textwidth}{16cm}
\usepackage{latexsym}
\usepackage{amsmath}
\usepackage{amssymb}
\usepackage{stackrel} 
\usepackage{color}
\usepackage{graphicx}
\usepackage{hyperref}

\newtheorem{lemma}{Lemma}
\newtheorem{prop}{Proposition}
\newtheorem{theo}{Theorem}

\theoremstyle{definition}
\newtheorem{defin}{Definition}
\newtheorem{remark}{Remark}
\newtheorem{example}{Example}
\pagestyle{myheadings}

\newcommand{\C}{\mathbb{C}}
\newcommand{\N}{\mathbb{N}}
\newcommand{\R}{\mathbb{R}}
\newcommand{\Q}{\mathbb{Q}}

\newcommand{\balpha}{\boldsymbol{\alpha}}
\newcommand{\bbeta}{\boldsymbol{\beta}}
\newcommand{\bgamma}{\boldsymbol{\gamma}}
\newcommand{\bm}{\boldsymbol{m}}
\newcommand{\bnu}{\boldsymbol{\nu}}

\newcommand{\brho}{\boldsymbol{\rho}}

\newcommand{\bs}{\boldsymbol{s}}

\newcommand{\bz}{\boldsymbol{z}}
\newcommand{\bzeta}{\boldsymbol{\zeta}}

\makeatletter
\def\author@andify{%
  \nxandlist {\unskip ,\penalty-1 \space\ignorespaces}%
    {\unskip {} \@@and~}%
    {\unskip \penalty-2 \space \@@and~}%
}
\makeatother

\begin{document}
\title[Estimates of formal solutions for some generalized moment PDEs]{Estimates of formal solutions for some generalized moment partial differential equations}
\author{Alberto Lastra}
\address{Departamento de F\'isica y Matem\'aticas\\
University of Alcal\'a\\
Ap. de Correos 20, E-28871 Alcal\'a de Henares (Madrid), Spain}
\email{alberto.lastra@uah.es}
\author{S{\l}awomir Michalik}
\address{Faculty of Mathematics and Natural Sciences,
College of Science\\
Cardinal Stefan Wyszy\'nski University\\
W\'oycickiego 1/3,
01-938 Warszawa, Poland}
\email{s.michalik@uksw.edu.pl}
\urladdr{\url{http://www.impan.pl/~slawek}}
\author{Maria Suwi\'nska}
\address{Faculty of Mathematics and Natural Sciences,
College of Science\\
Cardinal Stefan Wyszy\'nski University\\
W\'oycickiego 1/3,
01-938 Warszawa, Poland}
\email{m.suwinska@op.pl}
\date{}
\keywords{Formal solution, moment estimates, Newton polygon, moment partial differential equations}
\subjclass[2010]{35C10, 35G10}
\begin{abstract}
Using increasing sequences of real numbers, we generalize the idea of formal moment differentiation first introduced by W.~Balser and M.~Yoshino. Slight departure from the concept of Gevrey sequences enables us to include a wide variety of operators in our study. Basing our approach on tools such as the Newton polygon and divergent formal norms, we obtain estimates for formal solutions of certain families of generalized linear moment partial differential equations with constant and time variable coefficients.
\end{abstract}

\maketitle
\thispagestyle{empty}

\section{Introduction}

The present work is devoted to the study of a family of Cauchy problems of the form
\begin{equation}
\left\{ \begin{aligned}
P(\partial_{m_0,t}, \partial_{\bm, \bz}) u&=f(t,\bz)\\
\partial_{m_0, t}^j 
u(0,\bz)&=\varphi_j(\bz), \quad j=0,\ldots,K-1.
\end{aligned}
   \right.
	\label{epralintro} 
\end{equation} 
Here $K\ge 1$, and $P$ stands for a polynomial in $N+1$ variables with formal coefficients which belong to a class of formal power series in time variable $t$. The forcing term $f$ turns out to be a formal power series in $t$ with coefficients being functions holomorphic in a common neighborhood of the origin in $\C^N$. The initial conditions of the Cauchy problem are holomorphic functions in a neighborhood of the origin in $\C^N$. 

The operators $\partial_{m_0,t}$ and $\partial_{m_j,z_j}$, with $\partial_{\bm,\bz}=\partial_{m_1,z_1}\cdots \partial_{m_N,z_N}$, fall into a family of formal moment differentiation, which has been put forward by W. Balser and M. Yoshino in~\cite{BY}. Given a sequence $m$ of positive real numbers, the precise definition of an $m$-differential operator $\partial_{m,z}$ is provided in Definition~\ref{defi164}.

Differential operators of this kind generalize not only the usual derivatives in the particular case that $m=(\Gamma(1+n))_{n\ge0}$, but also other differential operators. More precisely, the operator $\partial_{m,z}$ is quite related to the Caputo $1/p-$fractional differential operator $\partial_z^{1/p}$ in the case when $m=(\Gamma(1+\frac{n}{p}))_{n\ge0}$ (we refer the reader to~\cite{M}, Remark 3 for further details). 
Similarly, for $q\in(0,1)$ and for the sequence $m=([n]_q!)_{n\ge0}$, the $m$-differential operator $\partial_{m,t}$ coincides with the $q$-difference operator $D_{q,t}$ (see Example \ref{ex:2}).
As a matter of fact, sequences as above usually come from the moments of some operators, and therefore they are known as moment sequences, whereas the related differential equations are said to be moment differential equations. In recent years the interest in moment differential equations has increased and where several important advances have been accomplished in the framework of asymptotic theory of solutions of such equations. After the seminal work~\cite{BY}, different works have dealt with problems in this direction. It is worth mentioning the Cauchy problems involving two moment sequences studied in~\cite{M}. Such moment sequences corresponded to different kernels (see Section 6.5 in~\cite{balser} for the fundamentals on this theory). 

The works~\cite{michalik13jmaa,michalik17fe} provide further steps in the study of the convergence and summability results for families of homogeneous and inhomogeneous linear moment partial differential equations in two complex variables with constant coefficients.

In~\cite{lastramaleksanz} the authors put forward a definition of summability of formal power series related to the so-called strongly regular sequences and related functional spaces in order to apply their results to the study of the summability properties of the formal solutions to some moment partial differential equations. More recently S. Michalik and B. Tkacz~\cite{michaliktkacz} have studied Stokes phenomenon concerning the solutions of certain families of linear moment partial differential equations with constant coefficients under certain conditions on the Cauchy data. 

This research is focused on the study of Cauchy problems in the form (\ref{epralintro}), where $m_0$ and each of the components in $\bm=(m_1,\ldots,m_N)$ are sequences of positive real numbers under certain assumptions (see Section~\ref{secgenmom}) to be specified. As mentioned above, such properties of the sequences naturally appear while handling spaces of functions and/or formal power series which are subject to such bounds. We refer the reader to~\cite{javi17} for further details on the properties satisfying the moment sequences endowed by the corresponding functional spaces, in the framework of ultraholomorphic classes of functions.

It is important to mention the significance of the development of the formal norms in Section~\ref{secformalnorms}, which allow us to give upper estimates on every formal moment derivative simultaneously.

It is also worth emphasizing that 
generalized moment partial differential equations considered here contain, beside differential or fractional differential equations, also for example
$q$-difference-differential and fractional $q$-difference-differential equations.

The first main result in this work, Theorem~\ref{theo1}, states that if the coefficients in $P$ and forcing term $f(t,\bz)$ in (\ref{epralintro}) are formal power series of some appropriate $(M_n)$-order (see Definition~\ref{def1}), then the formal solution of (\ref{epralintro}) remains in the same space of formal power series, under the hypothesis that $m_0$ is a regular $(M_n)$-sequence and the components of $\bm$  are $(M_n)$-sequences of appropriate order. The relationship among orders and other parameters involved in the problem is provided by the Newton polygon associated with the equation. This result rests strongly on the construction of a family of formal norms, generalizing those found in~\cite{TY,michaliksuwinska}.

In the second main result (Theorem~\ref{theo2}) a similar behavior of the formal solution is observed when dealing with an equation with constant coefficients, under weaker constraints on the sequence $(M_n)_{n\ge0}$ involved in the elements determining the problem, and also under weaker assumption that $m_0$ is a $(M_n)$-sequence, but not necessarily regular. Finally, the multidimensional case is discussed, giving rise to a parallel result, Theorem~\ref{prop4}, under the assumption that the orders of the $(M_n)$-sequences in $\bm$ are rational numbers. It is worth mentioning that this last constraint on the orders is due to technical reasons (see Lemma~\ref{le:11} and Lemma~\ref{le:12}).

The work is structured as follows: After establishing the notation in Section~\ref{secnot}, we define the sequences of numbers and formal power series involved in the construction and growth of the coefficients of the formal solution of the main problem. Some properties of the moment differential operators defining the problem are also described in Section~\ref{secgenmom}. The auxiliary results on the $\bm$-moment formal norms are provided in Section~\ref{secformalnorms}. The precise statement of the main problem is given in Section~\ref{secmom}, where accurate bounds on the formal norms lead to the first main result (Theorem~\ref{theo1}). The particular case of moment equations with constant coefficients in two dimensions is considered in Section~\ref{sectwo}, leading to the result under milder conditions. Section~\ref{secsev} gives answer to the multidimensional framework of the problem. The paper concludes with Section~\ref{secauxlemas}, where some auxiliary lemmas on sequences of numbers are stated. We have left this part at the end of the work for the sake of clarity, and a more fluent reading of the text.

\section{Notation}\label{secnot}

Let us introduce the following notation. Throughout this work $\N$ stands for the set of positive integers, and $\N_0=\N\cup\{0\}$. Let $N\ge 1$. We write $D_R^N$ for the open polydisc in $\C^N$ with a center at $0$ and radius $R>0$, i.e., 
$$
D^N_R=\{(z_1,\dots,z_N)\in\C^N\colon |z_j|<R\ \textrm{for}\ j=1,\dots,N\}.
$$

For every multi-index $\balpha=(\alpha_1,\ldots,\alpha_N)$ and $\bbeta=(\beta_1,\ldots,\beta_N)$ in $\N_0^N$, all $\bz=(z_1,\ldots,z_N)\in\C^N$ and any $A\in\R$, we write
\begin{align*}
\balpha+\bbeta=(\alpha_1+\beta_1,\ldots,\alpha_N+\beta_N)\qquad & |\balpha|=\alpha_1+\ldots+\alpha_N\\
\bz^{\balpha}=z_1^{\alpha_1}\cdots z_N^{\alpha_N}\qquad & \balpha\cdot\bbeta=\alpha_1\beta_1+\ldots+\alpha_N\beta_N\\
A\bz=(Az_1,\ldots,Az_N)\qquad & \boldsymbol{0}=(0,\ldots,0)
\end{align*}

Let $\bm=(m_1,\ldots,m_N)$ where $m_1=(m_1(n))_{n\ge0},\ldots,m_N=(m_N(n))_{n\ge 0}$ are sequences of positive real numbers. For any $\balpha=(\alpha_1,\ldots,\alpha_N)\in\N_0^N$ we write $\partial_{\bm,\bz}^{\balpha}=\partial_{m_1,z_1}^{\alpha_1}\cdots\partial_{m_N,z_N}^{\alpha_N}$, where the operator $\partial_{m_j,z_j}$ stands for the $m_j$-differential operator whose precise definition can be found in Definition~\ref{defi164}. Let $M=(M_n)_{n\ge0}$ be a sequence of positive real numbers. We write 
$$M_{\balpha}^{\bs}=M_{\alpha_1}^{s_1}\cdots M_{\alpha_N}^{s_N},$$
for all $\balpha=(\alpha_1,\ldots,\alpha_N)\in\N_0^N$ and $\bs=(s_1,\ldots,s_N)\in\R^N$.

Let $\bz=(z_1,\ldots,z_N)$. For formal power series with complex coefficients $f(\bz)=\sum_{\balpha\in\N_0^N} f_{\balpha}\bz^{\balpha}\in\C[[\bz]]=\C[[z_1,\dots,z_N]]$ and $g(\bz)=\sum_{\balpha\in\N_0^N} g_{\balpha}\bz^{\balpha}\in\R[[\bz]]$, where $g_{\balpha}\ge0$ for all $\balpha\in\N_0^N$, we write $f(\bz)\ll g(\bz)$ whenever $|f_{\balpha}|\le g_{\balpha}$ for every $\balpha\in\N_0^N$.

\section{Generalized moment differentiation}\label{secgenmom}
In this section we introduce the concept of $m$-moment differentiation, which extends the idea of W. Balser and M. Yoshino \cite{BY}. To this end, we first introduce certain families of sequences of generalized moments $(m(n))_{n\geq 0}$.
\begin{defin}
Let $(M_n)_{n\ge0}$ be a sequence of positive real numbers with $M_0=1$ and $s\in\R$. We say that a sequence of positive real numbers $(m(n))_{n\ge0}$ is an \emph{$(M_n)$-sequence of order $s$} if there exist $\mathfrak{a},\mathfrak{A}>0$ such that
\begin{equation}\label{e1}
\mathfrak{a}^n(M_n)^s\le m(n)\le \mathfrak{A}^n (M_n)^s,\quad n\ge0.
\end{equation}
\end{defin}

\begin{defin}
Let $(M_n)_{n\ge0}$ be a sequence of positive real numbers with $M_0=1$ and $s\in\R$. We say that a sequence of positive real numbers $(m(n))_{n\ge0}$ is a \emph{regular $(M_n)$-sequence of order $s$} if there exist $\mathfrak{a},\mathfrak{A}>0$ such that
\begin{equation}\label{e2}
\mathfrak{a}\left(\frac{M_n}{M_{n-1}}\right)^s\le \frac{m(n)}{m(n-1)}\le \mathfrak{A} \left(\frac{M_n}{M_{n-1}}\right)^s,\quad n\ge1.
\end{equation}
\end{defin}

\begin{remark}
Any regular $(M_n)$-sequence of order $s$ is a $(M_n)$-sequence of order $s$ for the same constants $\mathfrak{a}$ and $\mathfrak{A}$.
\end{remark}

\begin{defin}
Let $(M_n)_{n\ge0}$ be a sequence of positive real numbers with $M_0=1$. We say
\begin{itemize}
\item the sequence $(M_n)_{n\ge0}$ is \emph{of moderate growth (mg)} if there exists a positive constant $B$ such that $M_{n+k}\le B^{n+k}M_nM_k$ for all $n,k\ge0$.
\item the sequence $(M_n)_{n\ge0}$ satisfies the property $(\star)$ if there exists $b>0$ such that $M_{n+k}\ge b^{n+k}M_nM_k$, for all $n,k\ge0$.
\item the sequence is \emph{logarithmically convex (lc)} if $(M_n)^2\le M_{n-1}M_{n+1}$ for all $n\ge1$.
\end{itemize}
\end{defin}

\begin{remark}
Observe that any increasing sequence of positive real numbers $(M_n)_{n\ge0}$ which satisfies (lc) and $M_0=1$ is such that $M_{n+k}\geq M_n M_k$ for all $n,k\in\N_0$ and, in consequence, $(M_n)_{n\ge0}$ satisfies the property $(\star)$.
\end{remark}

\begin{remark}\label{ejstar}
The sequence
$$M_n=\left\{ \begin{aligned}
             n! &\quad \textrm{ if }n \textrm{ is even}\\
             (n-1)! &\quad\textrm{ if } n \textrm{ is odd}\\
             \end{aligned}
   \right.
$$
satisfies ($\star$) and (mg) conditions, but it is not (lc).

Observe also that $(M_n)$ is an $(n!)$-sequence of order $1$, but is not a regular $(n!)$-sequence.
\end{remark}

\begin{defin}\label{def1}
Let $(M_n)_{n\ge0}$ be a sequence of positive real numbers with $M_0=1$ and $s\in\R$. 

Given $\hat{g}(t)=\sum_{n\ge0}a_{n}t^n\in\C[[t]]$, we say that $\hat{g}(t)$ is of \emph{$(M_n)$-order $s$} if there exist $A,B>0$ such that 
$$|a_n|\le A B^n(M_n)^{s},$$
for every $n\ge 0$. We denote the set of all formal power series of $(M_n)$-order $s$ by $\C[[t]]^{(M_n)}_{s}$. 

Analogously, we say that the formal power series $\hat{g}(t,\bz)=\sum_{n\ge0}g_n(\bz)t^n\in\mathcal{O}(D_R^N)[[t]]$ is \emph{of $(M_n)$-order $s$} if for every $r\in(0,R)$ there exist $A,B>0$ such that
$$|g_n(\bz)|\le A B^n (M_n)^s,$$
for every $\bz\in D_r^N$ and all $n\ge0$. We denote the space of all such power series by $\mathcal{O}(D_R^N)[[t]]^{(M_n)}_{s}$.
\end{defin}

\begin{remark}
The previous definition can be naturally extended when dealing with formal power series with coefficients in some complex Banach space $(\mathbb{E},\left\|\cdot\right\|)$.
\end{remark}

In the following definition, $\mathbb{E}$ stands for a complex Banach space.

\begin{defin}\label{defi164}
Let $m=(m(n))_{n\ge0}$ be a sequence of positive real numbers. We define the (formal) \emph{$m$-differential operator} $\partial_{m,z}\colon\mathbb{E}[[z]]\to\mathbb{E}[[z]]$ by
$$\partial_{m,z}\left(\sum_{n\ge0}\frac{u_n}{m(n)}z^n\right)=\sum_{n\ge0}\frac{u_{n+1}}{m(n)}z^n,$$
for every $\sum_{n\ge0}\frac{u_n}{m(n)}z^n\in\mathbb{E}[[z]]$.
\end{defin}

\begin{example}
For every $s>0$ and $p\in\R$, the sequence $m_{s,p}=(\Gamma(1+sn)\prod_{j=0}^{n}\log^{p}(e+j))_{n\ge0}$ satisfies (mg) and (lc) (and therefore, ($\star$)). The case $p=0$ is the classical example of Gevrey sequence of order $s$. More precisely, Gevrey sequence of order $s=1$ corresponds to the usual differentiation $\partial_z$. Moreover, the sequence $m_{s,0}=(\Gamma(1+sn))_{n\ge0}$ for $s>0$ satisfies
$$(\partial_{m_{s,0},z}u)(z^s)=\partial_z^s(u(z^s)),$$
with $\partial_z^s$ being the Caputo fractional derivative of order $s$ (see Remark 3~\cite{M}, and the references therein). The sequence with $s=1$ and $p=-1$ is quite related to the so-called 1+ level, appearing in the study of difference equations (see~\cite{immink}).
\end{example}

\begin{remark}
Observe that for every $\tilde{s}>0$ it holds that the sequence $m_{\tilde{s}s,\tilde{s}p}$ is a regular $m_{s,p}$- sequence of order $\tilde{s}$.
\end{remark}

\begin{example}
 \label{ex:2}
 Suppose that $q\in(0,1)$ and $m=([n]_q!)_{n\ge0}$, where 
 $[n]_q!=[1]_q[2]_q\cdots[n]_q$ and $[n]_q=1+q+\dots+q^{n-1}$.
 Then the operator $\partial_{m,t}$ coincides with the $q$-derivative
 $D_{q,t}$ defined as $D_{q,t} f(t)=\frac{f(qt)-f(t)}{qt-t}$.
 Moreover, since $1\le [n]_q\le \frac{1}{1-q}$ for any $n\in\N$,
 $m$ is a regular $(M_n)$-sequence of order $0$ for any sequence $(M_n)_{n\ge0}$ of positive numbers.
 Hence the family of equations (\ref{epralintro}) contains in particular linear $q$-difference-differential equations with time variable coefficients in the form $P(D_{q,t}, \partial_{\bz})u=f(t,\bz)$ (see, for example, \cite{ichinobeadachi}).
\end{example}

We may estimate $m$-moment derivatives as follows
\begin{prop}\label{prop1}
Let $(M_n)_{n\ge0}$ be a sequence of positive real numbers, with $M_0=1$, satisfying (mg) condition. Suppose that   $\bm=(m_1(n),\dots,m_N(n))_{n\ge 0}$, where $(m_1(n))_{n\ge 0}$, ..., $(m_N(n))_{n\ge 0}$ are $(M_n)$-sequences of non-negative orders $s_1,\dots,s_N$, respectively. We write $\bs=(s_1,\dots,s_N)$. 

Let $\varphi(\bz)\in\C[[\bz]]$ be such that
$$\varphi(\bz)\ll C\sum_{\balpha\in\N_0^N}(D\bz)^{\balpha},$$
for some $C,D>0$. Then, there exist $D',H>0$ such that
\begin{equation}\label{e4}
\partial_{\bm,\bz}^{\bbeta}\varphi(\bz)\ll CH^{|\bbeta|}M_{\bbeta}^{\bs}\sum_{\balpha\in\N_0^N}(D'\bz)^{\balpha},
\end{equation}
for every multi-index $\bbeta\in\N_0^N$.
\end{prop}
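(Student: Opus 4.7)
The plan is to unravel $\partial_{\bm,\bz}^{\bbeta}$ coefficient by coefficient and then use each hypothesis (the $(M_n)$-sequence bounds on the $m_j$, the moderate growth of $(M_n)$, and the dominating majorant of $\varphi$) in succession. First I would write $\varphi(\bz) = \sum_{\balpha} \varphi_{\balpha} \bz^{\balpha}$, so that the hypothesis on $\varphi$ gives $|\varphi_{\balpha}| \le C D^{|\balpha|}$ for all $\balpha \in \N_0^N$. Iterating Definition~\ref{defi164} in each variable yields
\begin{equation*}
\partial_{\bm,\bz}^{\bbeta} \varphi(\bz) = \sum_{\balpha \in \N_0^N} \left(\prod_{j=1}^{N} \frac{m_j(\alpha_j + \beta_j)}{m_j(\alpha_j)}\right) \varphi_{\balpha + \bbeta}\, \bz^{\balpha},
\end{equation*}
so the problem reduces to producing a suitable bound on the product of the ratios $m_j(\alpha_j+\beta_j)/m_j(\alpha_j)$.

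Next I would apply the $(M_n)$-sequence assumption: for each $j$, choose $\mathfrak{a}_j, \mathfrak{A}_j > 0$ with $\mathfrak{a}_j^n M_n^{s_j} \le m_j(n) \le \mathfrak{A}_j^n M_n^{s_j}$. This gives
\begin{equation*}
\frac{m_j(\alpha_j + \beta_j)}{m_j(\alpha_j)} \le \left(\frac{\mathfrak{A}_j}{\mathfrak{a}_j}\right)^{\alpha_j} \mathfrak{A}_j^{\beta_j} \frac{M_{\alpha_j + \beta_j}^{s_j}}{M_{\alpha_j}^{s_j}}.
\end{equation*}
Here is where I would invoke (mg) together with the assumption $s_j \ge 0$, which is the only place that sign hypothesis matters: since $M_{\alpha_j + \beta_j} \le B^{\alpha_j + \beta_j} M_{\alpha_j} M_{\beta_j}$ and $s_j \ge 0$, we get $M_{\alpha_j + \beta_j}^{s_j} \le B^{s_j(\alpha_j+\beta_j)} M_{\alpha_j}^{s_j} M_{\beta_j}^{s_j}$, so the $M_{\alpha_j}^{s_j}$ cancels and the ratio is dominated by $(\mathfrak{A}_j B^{s_j}/\mathfrak{a}_j)^{\alpha_j} (\mathfrak{A}_j B^{s_j})^{\beta_j} M_{\beta_j}^{s_j}$.

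Finally, I would set $K = \max_j (\mathfrak{A}_j B^{s_j}/\mathfrak{a}_j)$ and $L = \max_j \mathfrak{A}_j B^{s_j}$, take the product over $j$ to obtain
\begin{equation*}
\prod_{j=1}^N \frac{m_j(\alpha_j + \beta_j)}{m_j(\alpha_j)} \le K^{|\balpha|} L^{|\bbeta|} M_{\bbeta}^{\bs},
\end{equation*}
and combine with $|\varphi_{\balpha + \bbeta}| \le C D^{|\balpha|} D^{|\bbeta|}$ to conclude that the coefficient of $\bz^{\balpha}$ in $\partial_{\bm,\bz}^{\bbeta} \varphi(\bz)$ is bounded by $C (DL)^{|\bbeta|} M_{\bbeta}^{\bs} (DK)^{|\balpha|}$. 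Setting $H := DL$ and $D' := DK$ gives the claimed majorization. I do not expect any genuine obstacle here: the argument is a routine chain of inequalities, and the main subtlety is bookkeeping the constants and recognising that the $s_j \ge 0$ hypothesis is exactly what lets (mg) be applied to $M_n^{s_j}$ to cancel the $\alpha_j$-dependence in the denominator.
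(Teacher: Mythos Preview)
Your proof is correct and follows essentially the same route as the paper's: compute the coefficients of $\partial_{\bm,\bz}^{\bbeta}\varphi$, bound each ratio $m_j(\alpha_j+\beta_j)/m_j(\alpha_j)$ via the $(M_n)$-sequence estimates, and then use (mg) together with $s_j\ge 0$ to cancel the $M_{\alpha_j}^{s_j}$ in the denominator. The only cosmetic differences are that the paper first majorizes $\varphi$ by the geometric series and then applies $\partial_{\bm,\bz}^{\bbeta}$ (rather than differentiating and then bounding coefficients), and it works with uniform constants $\mathfrak{a},\mathfrak{A}$ and $B^{s^*}$ (with $s^*=\max_j s_j$) instead of your per-coordinate $\mathfrak{a}_j,\mathfrak{A}_j$ and $B^{s_j}$; the resulting $H$ and $D'$ differ only by harmless constant factors.
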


\begin{proof}
Let $\bbeta=(\beta_1,\ldots,\beta_N)\in\N_0^N$. Direct computations show that
$$\partial_{\bm,\bz}^{\bbeta}\varphi(\bz)\ll\partial_{\bm,\bz}^{\bbeta}\sum_{\balpha\in\N_0^N}CD^{|\balpha|}\bz^{\balpha}=C\sum_{\balpha\in\N_0^N}D^{|\balpha+\bbeta|}\frac{m_1(\alpha_1+\beta_1)\dots m_N(\alpha_N+\beta_N)}{m_1(\alpha_1)\dots m_N(\alpha_N)}\bz^{\balpha},$$
with $\balpha=(\alpha_1,\ldots,\alpha_N)$.
Regarding (\ref{e1}) and (mg) property of $(M_n)_{n\ge0}$, and putting $s^*=\max\{s_1,\dots,s_N\}$ we have
\begin{multline*}
\sum_{\balpha\in\N_0^N}CD^{|\balpha+\bbeta|}\frac{m_1(\alpha_1+\beta_1)\dots m_N(\alpha_N+\beta_N)}{m_1(\alpha_1)\dots m_N(\alpha_N)}\bz^{\balpha}\ll\sum_{\balpha\in\N_0^N}CD^{|\balpha+\bbeta|}\frac{\mathfrak{A}^{|\balpha+\bbeta|}}{\mathfrak{a}^{|\balpha|}}\frac{M_{\balpha+\bbeta}^{\bs}}{M_{\balpha}^{\bs}}\bz^{\balpha}\\
\ll\sum_{\balpha\in\N_0^N}C(D\mathfrak{A}B^{s^*})^{|\balpha+\bbeta|}\mathfrak{a}^{-|\balpha|}M_{\bbeta}^{\bs}\bz^{\balpha}, 
\end{multline*} 
which yields (\ref{e4}) for $H=D\mathfrak{A}B^{s^*}$ and $D'=D\mathfrak{A}B^{s^*}\mathfrak{a}^{-1}$.
\end{proof}

\section{Formal norms}\label{secformalnorms}
The crucial role in our study is played by the $\bm$-moment formal norm, which allows us to keep estimations of all moment derivatives together. This tool is an analogue of the divergent formal norm used in Gevrey estimations (see \cite{TY} and \cite{michaliksuwinska}).
\begin{defin}
Let $(M_n)_{n\ge0}$ be a sequence of positive real numbers with $M_0=1$, and
let $\bm=(m_1(n),\dots,m_N(n))_{n\ge 0}$, where $(m_1(n))_{n\ge 0}$, ..., $(m_N(n))_{n\ge 0}$ are $(M_n)$-sequences of non-negative orders $s_1$, ..., $s_N$, respectively. We define the \emph{$\bm$-moment formal norm} of $f(\bz)\in\mathcal{O}(D_R^N)$ by
$$\left\|f(\bz)\right\|_{\brho}=\sum_{\balpha\in\N_0^N}\frac{|\partial_{\bm,\bz}^{\balpha}f(\bz)|}{M_{\balpha}^{\bs}}\brho^{\balpha},$$
for $\bz\in D_R^N$, $\brho\in\C^N$ and $\bs=(s_1,\dots,s_N)$.
\end{defin}
In order to estimate the $\bm$-moment formal norms we need to introduce the formal power series 
$$\Theta^{(\bbeta)}(\brho):=\sum_{\balpha\in\N_0^N}\frac{M_{\balpha+\bbeta}^{\bs}}{M_{\balpha}^{\bs}}{\brho}^{\balpha}$$
for any $\bbeta\in\N_0^N$ and $\brho\in\C^N$.

The following results involve properties, which can be derived from the definition of the previous formal power series.

\begin{lemma}\label{lema41prima}
Let $(M_n)_{n\ge0}$ be a sequence of positive real numbers with $M_0=1$, which satisfies the (lc) condition. Let $\bbeta\in\N_0^N$ and $\bs\in\overline{\R}_+^N$. Then for all $\bgamma\in\N_0^N$ we have 
$$\Theta^{(\bbeta)}(\brho)\ll\frac{M_{\bbeta}^{\bs}}{M_{\bgamma+\bbeta}^{\bs}}\Theta^{(\bgamma+\bbeta)}(\brho).$$
\end{lemma}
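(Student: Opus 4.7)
The plan is to verify the majorization coefficient by coefficient. From the defining series, the coefficient of $\brho^{\balpha}$ on the left is $M_{\balpha+\bbeta}^{\bs}/M_{\balpha}^{\bs}$, while on the right it equals
$$\frac{M_{\bbeta}^{\bs}}{M_{\bgamma+\bbeta}^{\bs}}\cdot\frac{M_{\balpha+\bgamma+\bbeta}^{\bs}}{M_{\balpha}^{\bs}}.$$
Since all the quantities involved are positive, after cancelling $M_{\balpha}^{\bs}$ and clearing denominators the desired majorization $\Theta^{(\bbeta)}(\brho)\ll(M_{\bbeta}^{\bs}/M_{\bgamma+\bbeta}^{\bs})\,\Theta^{(\bgamma+\bbeta)}(\brho)$ becomes equivalent to
$$M_{\balpha+\bbeta}^{\bs}\,M_{\bgamma+\bbeta}^{\bs}\le M_{\bbeta}^{\bs}\,M_{\balpha+\bgamma+\bbeta}^{\bs}\qquad\text{for every }\balpha\in\N_0^N.$$

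Next I would expand $M_{\cdot}^{\bs}=\prod_{j=1}^N M_{\cdot_j}^{s_j}$ and exploit $\bs\in\overline{\R}_+^N$: since $x\mapsto x^{s_j}$ is monotone on $(0,\infty)$ for each $s_j\ge 0$, it is enough to establish, for each coordinate $j$ and for arbitrary non-negative integers $a,b,c$ (standing for $\alpha_j,\beta_j,\gamma_j$), the one-dimensional inequality
$$M_{a+b}\,M_{c+b}\le M_b\,M_{a+b+c}.$$
Multiplying the resulting bounds over $j=1,\dots,N$ will then recover the multi-index inequality displayed above.

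The substantive step is this last scalar inequality, and it is where the (lc) hypothesis comes in. Log-convexity $M_n^2\le M_{n-1}M_{n+1}$ is exactly the statement that $n\mapsto M_{n+1}/M_n$ is non-decreasing; consequently, for every fixed $k\ge 0$ the ratio
$$\frac{M_{n+k}}{M_n}=\prod_{i=0}^{k-1}\frac{M_{n+i+1}}{M_{n+i}}$$
is non-decreasing in $n$ as well, being a product of non-decreasing positive factors. Applying this with $k=c$ at the two points $n=b$ and $n=a+b$ gives
$$\frac{M_{b+c}}{M_b}\le\frac{M_{a+b+c}}{M_{a+b}},$$
which is a rearrangement of the needed bound.

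I do not expect a real obstacle here; the only point that demands attention is ensuring the exponent $s_j$ is non-negative before taking powers, so that the scalar inequality survives the exponentiation and the subsequent product over coordinates. Both are guaranteed by the standing hypothesis $\bs\in\overline{\R}_+^N$.
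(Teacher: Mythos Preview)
Your proof is correct and follows essentially the same route as the paper: compare the two series coefficientwise and reduce to the inequality $M_{\balpha+\bbeta}^{\bs}/M_{\balpha+\bbeta+\bgamma}^{\bs}\le M_{\bbeta}^{\bs}/M_{\bbeta+\bgamma}^{\bs}$, which holds coordinatewise by (lc). The paper writes this as a one-line multiply-and-divide manipulation of the defining series, whereas you spell out the coordinatewise reduction and the telescoping-ratio argument more explicitly, but the substance is identical.
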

\begin{proof}
The proof is based on the (lc) property of $(M_n)_{n\ge0}$. More precisely, we have
\begin{align*}
\Theta^{(\bbeta)}(\brho)&=\sum_{\balpha\in\N_0^N}\frac{M_{\balpha+\bbeta}^{\bs}}{M_{\balpha}^{\bs}}{\brho}^{\balpha}=\sum_{\balpha\in\N_0^N}\frac{M_{\balpha+\bbeta}^{\bs}}{M_{\balpha+\bbeta+\bgamma}^{\bs}}\frac{M_{\balpha+\bbeta+\bgamma}^{\bs}}{M_{\balpha}^{\bs}}{\brho}^{\balpha}\\
&\ll \frac{M_{\bbeta}^{\bs}}{M_{\bbeta+\bgamma}^{\bs}}\sum_{\balpha\in\N_0^N}\frac{M_{\balpha+\bbeta+\bgamma}^{\bs}}{M_{\balpha}^{\bs}}{\brho}^{\balpha}=\frac{M_{\bbeta}^{\bs}}{M_{\bbeta+\bgamma}^{\bs}}\Theta^{(\bbeta+\bgamma)}(\brho).
\end{align*}
\end{proof}

\begin{lemma}\label{lema42prima}
Let $(M_n)_{n\ge0}$ be a sequence of positive real numbers with $M_0=1$, which satisfies the (mg) condition and
let $f\in\mathcal{O}(D_R^N)$. For every $0<R'<R$ there exists $0<r<R'$ such that
$$\sup_{\bz\in D^N_{r}}\left\|f(\bz)\right\|_{\brho}\ll 2^NC\Theta^{(\boldsymbol{0})}(h\brho)$$
for $C=\sup_{\bz\in D^N_{R'}}|f(\bz)|$ and some $h>0$.
\end{lemma}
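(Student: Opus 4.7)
The plan is to combine Cauchy's estimates for $f$ with Proposition~\ref{prop1} in order to control each moment derivative $\partial_{\bm,\bz}^{\bbeta}f$ uniformly on a small polydisc, and then to recognize the resulting majorant as dominated by a dilation of $\Theta^{(\boldsymbol{0})}$.

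First I would fix $0<R'<R$ and set $C=\sup_{\bz\in D^N_{R'}}|f(\bz)|$. Cauchy's inequalities on polydiscs applied to the Taylor coefficients $f_{\balpha}$ of $f$ give $|f_{\balpha}|\le C/(R')^{|\balpha|}$, which, read as a majorant relation, becomes
$$f(\bz)\ll C\sum_{\balpha\in\N_0^N}(\bz/R')^{\balpha}.$$
This is exactly the hypothesis of Proposition~\ref{prop1} with $D=1/R'$. Invoking that proposition produces constants $H,D'>0$ (depending only on $R'$, on $\mathfrak{a},\mathfrak{A}$ from the $(M_n)$-sequence definitions, on the $(mg)$ constant $B$, and on $\bs$) such that
$$\partial_{\bm,\bz}^{\bbeta}f(\bz)\ll CH^{|\bbeta|}M_{\bbeta}^{\bs}\sum_{\balpha\in\N_0^N}(D'\bz)^{\balpha},\qquad \bbeta\in\N_0^N.$$

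Next I would shrink the domain: pick $r\in(0,R')$ small enough that $D'r\le 1/2$. For every $\bz\in D^N_r$ the geometric tail factors over coordinates as $\sum_{\balpha}(D'|\bz|)^{\balpha}=\prod_{j=1}^N(1-D'|z_j|)^{-1}\le 2^N$, whence $|\partial_{\bm,\bz}^{\bbeta}f(\bz)|\le 2^N C H^{|\bbeta|}M_{\bbeta}^{\bs}$. Dividing through by $M_{\bbeta}^{\bs}$ and summing against $\brho^{\bbeta}$ yields
$$\sup_{\bz\in D^N_r}\|f(\bz)\|_{\brho}\ll 2^N C\sum_{\bbeta\in\N_0^N}H^{|\bbeta|}\brho^{\bbeta}.$$

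It remains to absorb this geometric series into $\Theta^{(\boldsymbol{0})}(h\brho)=\sum_{\bbeta}M_{\bbeta}^{\bs}h^{|\bbeta|}\brho^{\bbeta}$ for a suitable $h>0$. Since $M_0=1$ and the orders $s_j$ are non-negative, in the setting of the paper one has $M_{\bbeta}^{\bs}\ge 1$, so taking $h\ge H$ gives $H^{|\bbeta|}\le M_{\bbeta}^{\bs}h^{|\bbeta|}$ coefficient by coefficient, which combined with the previous display delivers the claimed majorization. I expect this last step to be the most delicate point, since the cancellation of the factor $M_{\bbeta}^{\bs}$ built into the definition of the $\bm$-moment formal norm leaves only a plain geometric series, and one must appeal to a lower bound on $M_{\bbeta}^{\bs}$ to rewrite that bound in $\Theta^{(\boldsymbol{0})}$-form; the preceding estimates are routine Cauchy-estimate bookkeeping built on top of Proposition~\ref{prop1}.
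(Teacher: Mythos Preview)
Your approach matches the paper's proof exactly through the penultimate display: Cauchy estimates give the majorant $f(\bz)\ll C\sum_{\balpha}(\bz/R')^{\balpha}$, Proposition~\ref{prop1} controls all moment derivatives, and shrinking to $r\le 1/(2D')$ bounds the geometric tail by $2^N$, yielding $\sup_{\bz\in D^N_r}\|f(\bz)\|_{\brho}\ll 2^N C\sum_{\bbeta}H^{|\bbeta|}\brho^{\bbeta}$.

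The only slip is in your last paragraph, where you miscompute $\Theta^{(\boldsymbol{0})}$. From the definition $\Theta^{(\bbeta)}(\brho)=\sum_{\balpha}\frac{M_{\balpha+\bbeta}^{\bs}}{M_{\balpha}^{\bs}}\brho^{\balpha}$, setting $\bbeta=\boldsymbol{0}$ gives
\[
\Theta^{(\boldsymbol{0})}(h\brho)=\sum_{\balpha}(h\brho)^{\balpha},
\]
not $\sum_{\balpha}M_{\balpha}^{\bs}(h\brho)^{\balpha}$. Hence the geometric series you already obtained \emph{is} $\Theta^{(\boldsymbol{0})}(H\brho)$, and taking $h=H$ finishes the proof at once, exactly as the paper does. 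No appeal to $M_{\bbeta}^{\bs}\ge 1$ is needed; in fact that inequality is not guaranteed by the lemma's hypotheses, which only impose (mg) and $M_0=1$, so your detour would introduce an extraneous assumption.
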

\begin{proof}
Let $f\in\mathcal{O}(D_R^N)$ and $0<R'<R$. By Cauchy's integral formula $f(\bz)\ll C\sum_{\balpha\in\N_0^N}\frac{\bz^{\balpha}}{R'^{|\balpha|}}$ with $C=\sup_{\bz\in D^N_{R'}}|f(\bz)|$. In view of Proposition~\ref{prop1} there exist $h,B'>0$ such that
$$\partial_{\bm,\bz}^{\bbeta}f(\bz)\ll C h^{|\bbeta|} M_{\bbeta}^{\bs}\sum_{\balpha\in\N_0^N}(B'\bz)^{\balpha}\quad\textrm{for every}\quad\bbeta\in\N_0^N.$$
Therefore $|\partial_{\bm,\bz}^{\bbeta}f(\bz)|\le 2^NCh^{|\bbeta|} M_{\bbeta}^{\bs}$ for $\bz\in D^N_{r}$ and $r=\frac{1}{2B'}$. We conclude that
\begin{align*}
\sup_{\bz\in D^N_{r}}\left\|f(\bz)\right\|_{\brho}&\ll\sum_{\balpha\in\N_0^N}\sup_{\bz\in D^N_{r}}|\partial_{\bm,\bz}^{\balpha}f(\bz)|\frac{{\brho}^{\balpha}}{M_{\balpha}^{\bs}}\\
&\ll\sum_{\balpha\ge\boldsymbol{0}}\frac{2^NCh^{|\balpha|} M_{\balpha}^{\bs}}{M_{\balpha}^{\bs}}\brho^{\balpha}=2^NC\Theta^{(\boldsymbol{0})}(h\brho).
\end{align*}
\end{proof}

\begin{lemma}\label{lema43prima}
Let $R>0$ and $f\in\mathcal{O}(D_R^N)$ such that 
$$\sup_{\bz\in D_r^N}\left\|f(\bz)\right\|_{\brho}\ll C\Theta^{(\bgamma)}(h\brho)$$
for some $C,h>0$, $0<r<R$ and $\bgamma\in\N^N_0$. Then for all $\bbeta\in\N_0^N$ we have
$$\sup_{\bz\in D_r^N}\left\|\partial_{\bm,\bz}^{\bbeta} f(\bz)\right\|_{\brho}\ll C h^{|\bbeta|}\Theta^{(\bgamma+\bbeta)}(h\brho).$$
\end{lemma}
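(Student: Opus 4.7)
The plan is a direct coefficient-by-coefficient argument starting from the definition of the $\bm$-moment formal norm. By that definition,
$$\|\partial_{\bm,\bz}^{\bbeta}f(\bz)\|_{\brho}=\sum_{\balpha\in\N_0^N}\frac{|\partial_{\bm,\bz}^{\balpha+\bbeta}f(\bz)|}{M_{\balpha}^{\bs}}\brho^{\balpha},$$
which I can rewrite by multiplying and dividing each summand by $M_{\balpha+\bbeta}^{\bs}$, so that the factor $|\partial_{\bm,\bz}^{\balpha+\bbeta}f(\bz)|/M_{\balpha+\bbeta}^{\bs}$ — which is exactly the coefficient of $\brho^{\balpha+\bbeta}$ appearing in $\|f(\bz)\|_{\brho}$ — is exposed.

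Next, I would translate the hypothesis into a pointwise scalar bound. Since
$$\Theta^{(\bgamma)}(h\brho)=\sum_{\bnu\in\N_0^N}\frac{M_{\bnu+\bgamma}^{\bs}}{M_{\bnu}^{\bs}}h^{|\bnu|}\brho^{\bnu},$$
the assumption $\sup_{\bz\in D_r^N}\|f(\bz)\|_{\brho}\ll C\Theta^{(\bgamma)}(h\brho)$ means, comparing coefficients of $\brho^{\bnu}$, that
$$\sup_{\bz\in D_r^N}|\partial_{\bm,\bz}^{\bnu}f(\bz)|\le C\,h^{|\bnu|}\,M_{\bnu+\bgamma}^{\bs}\qquad(\bnu\in\N_0^N).$$
Substituting this with $\bnu=\balpha+\bbeta$ into the rewritten series, the factor $h^{|\bbeta|}$ pulls out, and the remaining terms assemble term by term into $\Theta^{(\bgamma+\bbeta)}(h\brho)$.

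I do not foresee any substantial obstacle: the whole argument is a rearrangement of the defining series combined with coefficient extraction from the $\ll$-hypothesis, all operations being legitimate because the series involved have non-negative coefficients. It is worth pointing out that no additional assumption on $(M_n)_{n\ge 0}$ such as (lc) or (mg) is used here, so the lemma is a purely formal consequence of the definitions of $\|\cdot\|_{\brho}$ and of $\Theta^{(\bgamma)}$; this is consistent with its role as an elementary bookkeeping tool to be combined later with the more refined estimates of Lemmas~\ref{lema41prima} and \ref{lema42prima}.
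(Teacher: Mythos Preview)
Your proposal is correct and follows essentially the same route as the paper: extract from the $\ll$-hypothesis the pointwise bound $\sup_{\bz\in D_r^N}|\partial_{\bm,\bz}^{\bnu}f(\bz)|\le C\,h^{|\bnu|}M_{\bnu+\bgamma}^{\bs}$, apply it with $\bnu=\balpha+\bbeta$, and reassemble the series for $\|\partial_{\bm,\bz}^{\bbeta}f(\bz)\|_{\brho}$ to obtain $C h^{|\bbeta|}\Theta^{(\bgamma+\bbeta)}(h\brho)$. Your observation that no (lc) or (mg) hypothesis is needed here is also accurate.
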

\begin{proof}
Since $\sup_{\bz\in D^N_r}\left\|f(\bz)\right\|_{\brho}\ll C\Theta^{(\bgamma)}(h\brho)$, for any $\balpha\in\N_0^N$ it follows that
$$\sup_{\bz\in D^N_r}\frac{|\partial_{\bm,\bz}^{\balpha}f(\bz)|}{M_{\balpha}^{\bs}}\le \frac{C h^{|\balpha|} M_{\balpha+\bgamma}^{\bs}}{M_{\balpha}^{\bs}},$$
leading to $\sup_{\bz\in D^N_r}|\partial_{\bm,\bz}^{\balpha} f(\bz)|\le C h^{|\balpha|}M_{\balpha+\bgamma}^{\bs}$. Hence
$$\sup_{\bz\in D^N_r}\frac{|\partial_{\bm,\bz}^{\balpha+\bbeta}f(\bz)|}{M_{\balpha}^{\bs}}\le \frac{Ch^{|\balpha+\bbeta|}M_{\balpha+\bbeta+\bgamma}^{\bs}}{M_{\balpha}^{\bs}},$$
and therefore
$$\sup_{\bz\in D_r^N}\left\|\partial_{\bm,\bz}^{\bbeta}f(\bz)\right\|_{\brho}\ll C h^{|\bbeta|} \Theta^{(\bgamma+\bbeta)}(h\brho).$$
\end{proof}

A generalized version of \cite[Lemma 4.4]{michaliksuwinska} can be obtained by means of Lemma~\ref{lema42prima}.

\begin{lemma}\label{lema44prima}
Let $\overline{s}\in\R$, and let $(M_n)_{n\ge0}$ be a sequence of positive real numbers with $M_0=1$, which satisfies (mg). Let $f(t,\bz)=\sum_{n\ge0}f_n(\bz)t^n\in\mathcal{O}(D_{R}^N)[[t]]_{\overline{s}}^{(M_n)}$. Then there exist $0<r<R$ and constants $A,B,h>0$ such that
$$\sup_{\bz\in D_r^N}\left\|f_n(\bz)\right\|_{\brho}\ll A B^n\Theta^{(\boldsymbol{0})}(h\brho)M_n^{\overline{s}}$$
for all $n\in\N_0$.
\end{lemma}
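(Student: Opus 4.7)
The plan is to reduce this to a uniform (in $n$) application of Lemma~\ref{lema42prima} to each coefficient $f_n(\bz)$, using the definition of $\mathcal{O}(D_R^N)[[t]]_{\overline{s}}^{(M_n)}$ to supply the $M_n^{\overline{s}}$-growth.

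First I would fix some $R'$ with $0 < R' < R$ and use the hypothesis $f \in \mathcal{O}(D_R^N)[[t]]_{\overline{s}}^{(M_n)}$ together with Definition~\ref{def1}: there exist constants $A_0, B_0 > 0$ (depending on $R'$) such that
$$C_n := \sup_{\bz \in D_{R'}^N} |f_n(\bz)| \le A_0 B_0^n M_n^{\overline{s}}, \qquad n \ge 0.$$
Next I would apply Lemma~\ref{lema42prima} to each $f_n$ with this fixed $R'$. That lemma supplies some $0 < r < R'$ and $h > 0$ such that
$$\sup_{\bz \in D_r^N} \|f_n(\bz)\|_{\brho} \ll 2^N C_n\, \Theta^{(\boldsymbol{0})}(h\brho).$$
Combining the two estimates gives the desired bound with $A := 2^N A_0$, $B := B_0$, and the same $h$ from Lemma~\ref{lema42prima}.

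The only thing that needs to be checked carefully — and this is the point where the argument could go wrong — is that both $r$ and $h$ can be chosen \emph{independently} of the index $n$. Inspecting the proof of Lemma~\ref{lema42prima}: the radius $r = 1/(2B')$ and the constant $h$ are produced by Proposition~\ref{prop1} applied with majorant constant $D = 1/R'$; so $B'$, and hence $r$ and $h$, depend only on $R'$, the (mg)-constant $B$ of $(M_n)_{n\ge 0}$, the orders $\bs$, and the constants $\mathfrak{a}, \mathfrak{A}$ coming from the $(M_n)$-sequences $m_j$. None of these depend on the particular holomorphic function being estimated, so the same $r$ and $h$ work for every $f_n$ simultaneously. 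Once this uniformity is observed, assembling the final inequality is immediate.
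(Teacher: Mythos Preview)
Your proposal is correct and matches the paper's approach: the paper omits the proof, simply noting that the lemma is obtained by means of Lemma~\ref{lema42prima}, which is precisely the uniform application you carry out, with the crucial observation that $r$ and $h$ depend only on $R'$ and not on the individual $f_n$.
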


\section{Moment equations with time variable coefficients}\label{secmom}
Let $(M_n)_{n\ge0}$ be a non-decreasing sequence of positive real numbers such that $M_0=1$, which satisfies (lc) and (mg) conditions. We also fix $(M_n)$-sequences of orders $s_0,s_1,\dots,s_N\geq 0$, respectively, say $(m_0(n))_{n\ge0}$, $(m_1(n))_{n\ge0},\ldots,(m_N(n))_{n\ge0}$, and we denote $\bm=(\bm(n))_{n\ge0}=((m_1(n))_{n\ge0},\dots,(m_N(n))_{n\ge0})$ and $\bs=(s_1,\dots,s_N)$. Let $\Lambda$ be a finite subset of $\N_0\times\N_0^N$, and for all $(j,\balpha)\in \Lambda$ let us take $a_{j\balpha}(t)\in\C[[t]]$. Let $K\ge 1$ and put
\begin{equation}\label{e344}
P(\partial_{m_0,t},\partial_{\bm,\bz})=\partial_{m_0,t}^{K}+\sum_{(j,\balpha)\in \Lambda}a_{j\balpha}(t)\partial_{m_0,t}^j\partial_{\bm,\bz}^{\balpha}.
\end{equation}

\begin{defin}
We define the \emph{Newton polygon} associated with (\ref{e344}) as the convex hull of the set 
$$\Delta(Ks_0,-K)\cup\bigcup_{(j,\balpha)\in\Lambda}\Delta(js_0+\balpha\cdot \bs,\hbox{ord}_t(a_{j\balpha})-j)$$
and denote it by $N(P,s_0,\bs)$.
Here $\Delta(a,b):=\{(x,y)\in\R^2\colon x\le a,\ y\ge b\}$ for all $(a,b)\in\R^2$ and
$\hbox{ord}_t(a_{j\balpha})$ denotes the order of zero of the function $a_{j\balpha}(t)$ at $t=0$.
\end{defin}

We assume that the following condition holds:
\begin{itemize}
\item[(a)] $\hbox{ord}_t(a_{j\balpha}) \ge \max\{0,j-K+1\}$ for all $(j, \balpha) \in\Lambda$. 
\end{itemize}

Regarding (a), we have that the first positive slope of $N(P,s_0,\bs)$, which will be denoted by $k_1$, is determined by
$$\frac{1}{k_1}=\max\left\{0,\max_{(j,\balpha)\in\Lambda}\left\{\frac{s_0(j-K)+\bs\cdot\balpha}{q_{j\balpha}}\right\}\right\},$$
where $q_{j\balpha}=\hbox{ord}_t(a_{j\balpha})-j+K$.

The main problem under study is the Cauchy problem 

\begin{equation} 
\left\{ \begin{aligned}
P(\partial_{m_0,t}, \partial_{\bm, \bz}) u(t,\bz)&=f(t,\bz)\\
\partial_{m_0, t}^j 
u(0,\bz)&=\varphi_j(\bz), \quad j=0,\ldots,K-1.
\end{aligned}
   \right.
	\label{epral} 
\end{equation} 

Additionally, we assume:
\begin{enumerate} 
\item[(b)] $\varphi_j\in\mathcal{O}(D^N_R)$ for all $j=0,1,\ldots, K-1$ and $f\in\mathcal{O}(D^N_R)[[t]]^{(M_n)}_{1/k_1}$ for some $R>0$, 
\item[(c)] $a_{j\balpha}(t)\in\C[[t]]_{1/k_1}^{(M_n)}$ for all $(j, \balpha) \in\Lambda$, 
\item[(d)] $m_0$ is a regular $(M_n)$-sequence. 
\end{enumerate} 

In the next main lemma we estimate $\bm$-formal norms of the coefficients of the formal solution of (\ref{epral}) under  conditions listed above. 
\begin{lemma}\label{lema51prima}
Fix $\bbeta\in \N^N_0$ and $\nu>0$ satisfying $\bs\cdot\bbeta=s_0\nu+\frac{1}{k_1}$. Let $\hat{u}(t, \bz) =\sum_{n\ge0} u_n(\bz) t^n$ be a formal solution of (\ref{epral}). Then, there exist $r\in(0,R)$ and constants $C, H, h>0$ such that
\begin{equation}\label{e333}
\left\|u_n(\bz)\right\|_{\brho}\ll \frac{CH^n}{M_n^{\nu s_0} }\Theta^{(n\bbeta)}(h \brho)
\end{equation}
 for all $n\in\N_0$ and $\bz\in D^N_r$.
\end{lemma}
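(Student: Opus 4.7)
My plan is to prove the bound (\ref{e333}) by induction on $n$.

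For the base case $0 \le n \le K-1$, the initial conditions (together with $m_0(0) = 1$, which follows from the $(M_n)$-sequence definition with $M_0 = 1$) give $u_n(\bz) = \varphi_n(\bz)/m_0(n)$. Applying Lemma~\ref{lema42prima} to each $\varphi_n \in \mathcal{O}(D^N_R)$ produces $r_n \in (0,R)$ and $h_n, C_n > 0$ such that $\sup_{\bz \in D^N_{r_n}}\|\varphi_n(\bz)\|_\brho \ll 2^N C_n \Theta^{(\boldsymbol{0})}(h_n\brho)$, and Lemma~\ref{lema41prima} (applied with $\bgamma = n\bbeta$) then lifts the index from $\boldsymbol{0}$ to $n\bbeta$ at the cost of a factor $1/M_{n\bbeta}^{\bs}$. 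Since only finitely many values of $n$ are involved, I would take $r = \min_n r_n$ and $h \ge \max_n h_n$ and then choose $C, H$ large enough to absorb all the remaining constants into the shape of (\ref{e333}).

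For the inductive step, I would assume (\ref{e333}) holds for every $m < n+K$ and read off the $t^n$-coefficient of (\ref{epral}) to obtain the recursion
\begin{equation*}
u_{n+K}(\bz) = \frac{m_0(n)}{m_0(n+K)}\Bigl[f_n(\bz) - \sum_{(j,\balpha)\in\Lambda}\sum_{\substack{k+l=n\\ k \ge \hbox{ord}_t(a_{j\balpha})}} a_{j\balpha,k}\,\frac{m_0(l+j)}{m_0(l)}\,\partial_{\bm,\bz}^{\balpha} u_{l+j}(\bz)\Bigr].
\end{equation*}
Condition (a) guarantees $l + j \le n + K - 1$ throughout the sum, so the inductive hypothesis applies to every $u_{l+j}$. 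Each factor on the right can then be estimated: the $m_0$-ratios via regularity (d), yielding powers of $M_{n+K}/M_n$ and $M_{l+j}/M_l$ with exponent $s_0$; the coefficient $a_{j\balpha,k}$ via condition (c), yielding $|a_{j\balpha,k}| \le A_a B_a^k M_k^{1/k_1}$; Lemma~\ref{lema44prima} applied to $f$, yielding $\|f_n\|_\brho \ll A_f B_f^n M_n^{1/k_1} \Theta^{(\boldsymbol{0})}(h_f\brho)$; and the inductive hypothesis combined with Lemma~\ref{lema43prima}, yielding $\|\partial_{\bm,\bz}^{\balpha} u_{l+j}\|_\brho \ll CH^{l+j}h^{|\balpha|}M_{l+j}^{-\nu s_0}\Theta^{((l+j)\bbeta + \balpha)}(h\brho)$. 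I would then invoke Lemma~\ref{lema41prima} (possibly in combination with the auxiliary sequence lemmas of Section~\ref{secauxlemas}) to replace each $\Theta^{((l+j)\bbeta + \balpha)}$ by the common $\Theta^{((n+K)\bbeta)}$, picking up the factor $M^{\bs}_{(l+j)\bbeta+\balpha}/M^{\bs}_{(n+K)\bbeta}$.

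The crux of the induction is the bookkeeping of the accumulated $M$-factors. Combining the inequalities produced by (d) and (c) with the Newton polygon inequality $s_0(j-K) + \bs\cdot\balpha \le q_{j\balpha}/k_1$ (which characterizes $k_1$), the normalization $\bs\cdot\bbeta = s_0\nu + 1/k_1$, and the (mg) and (lc) conditions on $(M_n)$, one verifies that each term of the double sum contributes at most $CH^{n+K}M_{n+K}^{-\nu s_0}\Theta^{((n+K)\bbeta)}(h\brho)$ times a factor summable in $k$ (across the convolution $k+l=n$) and bounded uniformly in $(j,\balpha)\in\Lambda$ by the finite cardinality of $\Lambda$. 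Choosing $H$ (and $h$) sufficiently large but independent of $n$ then closes the induction. The main obstacle is precisely this final balancing: ensuring that the powers of $M$-terms assembled from the various factors collapse exactly to $M_{n+K}^{-\nu s_0}$ and that the resulting convolution sum is controlled uniformly in $n$, all while respecting the multi-index comparisons required by Lemma~\ref{lema41prima}. This is the step where every standing assumption—the order hypothesis on $f$ and the $a_{j\balpha}$, the regularity of $m_0$, the (lc)/(mg) structure of $(M_n)$, and the Newton polygon relation tying $\bbeta$ and $\nu$—plays an essential role.
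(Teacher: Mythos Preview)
Your proposal is correct and follows essentially the same route as the paper's proof: induction on $n$, the same recursion for $u_n$ obtained by equating $t$-coefficients, the same estimates on the $m_0$-ratios via regularity (d), on the $a_{j\balpha}$-coefficients via (c), on $f_n$ via Lemma~\ref{lema44prima}, and on $\partial_{\bm,\bz}^{\balpha}u_{n-p}$ via the inductive hypothesis together with Lemma~\ref{lema43prima}, followed by Lemma~\ref{lema41prima} to pass to $\Theta^{(n\bbeta)}$ and the auxiliary lemmas of Section~\ref{secauxlemas} (in particular Lemma~\ref{lema4}) to reduce all $M$-factors to powers of $M_n/M_{n-1}$ whose exponent is then shown to be nonpositive by the Newton polygon inequality---exactly your ``$\Phi\le 0$'' step. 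The only cosmetic differences are that the paper shifts the index (writing the recursion for $u_n$ with $n\ge K$ rather than for $u_{n+K}$, and parametrizing the convolution by $p=n-l-j+K$ running from $q_{j\balpha}$ upward) and does not spell out the base case $0\le n\le K-1$, which you handle explicitly via Lemma~\ref{lema42prima}.
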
 
\begin{proof} 
Analogous estimates as those in \cite[Lemma 5.1]{michaliksuwinska} yield that for $n\ge K$ one has 
$$u_n(\bz)=\frac{m_0(n-K)}{m_0(n)}\left(f_n(\bz)-\sum_{(j, \balpha) \in \Lambda}\sum_{p=q_{j\balpha}}^n c_{j\balpha p}\frac{m_0(n-p)} {m_0(n-p-j)}\partial_{\bm,\bz}^{\balpha} u_{n-p}(\bz) \right),$$
where $t^Kf(t,\bz)=\sum_{n\ge K} f_n(\bz) t^n$ and $t^{K-j}a_{j\balpha}(t)=\sum_{p\ge q_{j\balpha}} c_{j\balpha p} t^p,$ for all $(j, \balpha)\in \Lambda$.

Since $a_{j\balpha}(t) \in\C[[t]]_{1/k_1}^{(M_n)}$, for every $(j, \balpha)\in\Lambda$ there exist $A_{j\balpha}, B>0$ such that
$$|c_{j\balpha p}|\le A_{j\balpha}B^pM^{1/k_1}_{p-q_{j\balpha}}$$
for every $p\ge q_{j\balpha}$. Moreover, Lemma~\ref{lema44prima} yields the existence of $\tilde{A},\tilde{B}, h>0$ such that 
$$\left\|f_n(\bz)\right\|_{\brho}\ll\tilde{A}\tilde{B}^n\Theta^{(\boldsymbol{0})}(h\brho)M_n^{1/k_1}, $$
for every $n\in\N$ and $\bz\in D^N_r$. Therefore, one arrives to the following estimates for $n\ge K$:
$$\left\|u_n(\bz)\right\|_{\brho}\ll I+II,$$
where
$$I=\left(\frac{M_{n-K}}{M_n}\right)^{s_0}\frac{1}{\mathfrak{a}^{K}}\tilde{A}\tilde{B}^n\Theta^{(\boldsymbol{0})}(h\brho) (M_n)^{1/k_1}$$
and
\begin{equation*}
II=\left(\frac{M_{n-K}}{M_n}\right)^{s_0}\frac{1}{\mathfrak{a}^{K}}\left[\sum_{(j,\balpha)\in\Lambda}\sum_{p=q_{j\balpha}}^{n}A_{j\balpha}B^pM_{p-q_{j\balpha}}^{1/k_1} \mathfrak{A}^j\left(\frac{M_{n-p}}{M_{n-p-j}}\right)^{s_0}\left\|\partial_{m,\bz}^{\balpha}u_{n-p}(\bz)\right\|_{\brho}\right].
\end{equation*}
We provide upper bounds for $I$ and $II$ in two steps, and obtain (\ref{e333}) by induction on $n$.

Let $\tilde{C}$ be the constant obtained after the application of Lemma~\ref{lema4}. In the following upper estimates we apply that result and also Lemma~\ref{lema41prima} and Lemma~\ref{lema2}, and monotonicity of $(M_n)_{n\ge0}$. We have
\begin{align}
I&\ll \tilde{C}^{Ks_0}\left(\frac{M_{n-1}}{M_n}\right)^{Ks_0}\frac{1}{\mathfrak{a}^{K}}\tilde{A}\tilde{B}^{n}\Theta^{(\boldsymbol{0})}(h\brho)(M_n)^{1/k_1}\nonumber\\
&\ll\left(\frac{\tilde{C}^{s_0}}{\mathfrak{a}}\right)^{K}\tilde{A}\tilde{B}^n\left(\frac{M_{n-1}}{M_n}\right)^{Ks_0}(M_{n})^{1/k_1}\frac{\Theta^{(n\bbeta)}(h\brho)}{M_{n\bbeta}^{\bs}}\nonumber\\
&\ll \left(\frac{\tilde{C}^{s_0}}{\mathfrak{a}}\right)^{K}\tilde{A}\tilde{B}^n\frac{\Theta^{(n\bbeta)}(h\brho)}{M_{n}^{s_0\nu}}\ll \frac{1}{2}CH^n\frac{\Theta^{(n\bbeta)}(h\brho)}{M_n^{s_0\nu}},\label{e404}
\end{align}
for $C\ge 2(\tilde{C}^{s_0}/\mathfrak{a})^{K}\tilde{A}$ and $H\ge \tilde{B}$.

On the other hand, Lemma~\ref{lema43prima} together with the inductive hypothesis yield
$$\left\|\partial_{\bm,\bz}^{\balpha}u_{n-p}(\bz)\right\|_{\brho}\ll \frac{CH^{n-p}}{M_{n-p}^{\bnu s_0}} h^{|\balpha|}\Theta^{((n-p)\bbeta+\balpha)}(h\brho),$$
which allows to apply Lemma~\ref{lema41prima} and Lemma~\ref{lema4} twice to arrive at
\begin{align*}
II\ll{}&\left(\frac{\tilde{C}^{s_0}}{\mathfrak{a}}\right)^{K}\left(\frac{M_{n-1}}{M_{n}}\right)^{Ks_0}\sum_{(j,\alpha)\in\Lambda}\sum_{p=q_{j\balpha}}^{n}A_{j\balpha}\mathfrak{A}^jB^p(M_{p-q_{j\balpha}})^{1/k_1}\left(\frac{M_{n-p}}{M_{n-p-j}}\right)^{s_0}\\
&\times \frac{CH^nh^{|\balpha|}}{(M_{n-p})^{\nu s_0}H^p} \frac{M^s_{(n-p)\bbeta+\balpha}}{M^{\bs}_{n\bbeta}}\Theta^{(n\bbeta)}(h\brho)\\
\ll{}&\left(\frac{\tilde{C}^{s_0}}{\mathfrak{a}}\right)^{K}\frac{CH^n}{(M_{n})^{\nu s_0}}\left(\frac{M_{n-1}}{M_{n}}\right)^{Ks_0}\sum_{(j,\balpha)\in\Lambda}\sum_{p=q_{j\balpha}}^{n}\frac{h^{|\balpha|} \mathfrak{A}^jA_{j\balpha}B^p}{H^p}(M_{p-q_{j\balpha}})^{1/k_1}\left(\frac{M_n}{M_{n-1}}\right)^{j s_0}\\
&\times \left(\frac{M_{n}}{M_{n-p}}\right)^{\nu s_0}\frac{M_{(n-p)\bbeta+\balpha}^{\bs}}{M_{n\bbeta}^{\bs}}\Theta^{(n\bbeta)}(h\brho)\\
\ll{}&\left(\frac{\tilde{C}^{s_0}}{\mathfrak{a}}\right)^{K}\frac{CH^n}{(M_{n})^{\nu s_0}}\sum_{(j,\balpha)\in\Lambda}\sum_{p=q_{j\balpha}}^{n}\frac{h^{|\balpha|} A_{j\balpha}\mathfrak{A}^jB^p}{H^p}\left(\frac{M_{n-1}}{M_{n}}\right)^{Ks_0}\\
&\times\left(\frac{M_n}{M_{n-1}}\right)^{\frac{p-q_{j\balpha}}{k_1}+js_0+p\nu s_0}\tilde{C}^{(p\bbeta-\balpha)\cdot \bs}\left(\frac{M_{n-1}}{M_n}\right)^{(p\bbeta-\balpha)\cdot \bs}\Theta^{(n\bbeta)}(h\brho)\\
\ll{}&\frac{CH^n}{(M_{n})^{\nu s_0}}\sum_{(j,\balpha)\in\Lambda}\sum_{p=q_{j\balpha}}^{n}\left(\frac{\tilde{C}^{s_0}}{\mathfrak{a}}\right)^{K} h^{|\balpha|} A_{j\balpha}\mathfrak{A}^j\tilde{C}^{-\balpha\cdot \bs}\left(\frac{\tilde{C}^{\bbeta\cdot \bs}B}{H}\right)^p\left(\frac{M_n}{M_{n-1}}\right)^{\Phi}\Theta^{(n\bbeta)}(h\brho),
\end{align*}
with 
\begin{multline*}
\Phi=\frac{p-q_{j\alpha}}{k_1}+j s_0+p\nu s_0-K s_0-p\beta\cdot s+\alpha\cdot s=\frac{p-q_{j\alpha}}{k_1}+j s_0+p\nu s_0-K s_0-p s_0\nu-\frac{p}{k_1}+\alpha\cdot s\\
=-\frac{q_{j\alpha}}{k_1}+s_0(j-K)+\alpha\cdot s\le 0,
\end{multline*}
from the definition of $k_1$. We conclude that
\begin{align}
II&\ll \frac{CH^n}{(M_{n})^{\nu s_0}}\sum_{(j,\balpha)\in\Lambda}\left(\frac{\tilde{C}^{s_0}}{\mathfrak{a}}\right)^{K} h^{|\balpha|} A_{j\balpha}\mathfrak{A}^j\tilde{C}^{-\balpha s}\left( \sum_{p\ge q_{j\balpha}}\left(\frac{\tilde{C}^{\bbeta\cdot \bs}B}{H}\right)^p\right)\Theta^{(n\bbeta)}(h\brho)\nonumber\\
&\ll\frac{CH^n}{(M_{n})^{\nu s_0}}\Theta^{(n\bbeta)}(h\brho)\sum_{(j,\balpha)\in\Lambda}\left(\frac{\tilde{C}^{s_0}}{\mathfrak{a}}\right)^{K} h^{|\balpha|} A_{j\balpha}\mathfrak{A}^j\tilde{C}^{-\balpha \bs} \frac{\left(\frac{\tilde{C}^{\bbeta\cdot \bs}B}{H}\right)^{q_{j\balpha}}}{1-\frac{\tilde{C}^{\bbeta\cdot \bs}B}{H}}\nonumber\\
&\ll \frac{1}{2}\frac{CH^n}{(M_{n})^{\nu s_0}}\Theta^{(n\bbeta)}(h\brho),\label{e429}
\end{align}
for large enough $H$. The result follows from (\ref{e404}) and (\ref{e429}).
\end{proof}

Now we are ready to state one of the main results of the paper.
\begin{theo}\label{theo1}
Let $\hat{u}(t,\bz)=\sum_{n\ge0}u_n(\bz)t^n$ be a formal solution of (\ref{epral}). Then $\hat{u}(t,\bz)\in\mathcal{O}(D^N_r)[[t]]_{1/k_1}^{(M_n)}$ for some $r\in(0,R)$, i.e., for all $r'\in(0,r)$ there exist $\tilde{C},\tilde{H}>0$ such that
$$\sup_{\bz\in D^N_{r'}}|u_n(\bz)|\le\tilde{C}\tilde{H}^n(M_n)^{1/k_1},\quad n\in\N_0.$$
\end{theo}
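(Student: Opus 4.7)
The strategy is to deduce the asserted pointwise bound on $|u_n(\bz)|$ directly from the $\bm$-moment formal norm estimate supplied by Lemma~\ref{lema51prima}, and then to convert the combinatorial factor $M_{n\bbeta}^{\bs}/M_n^{\nu s_0}$ into the target growth $M_n^{1/k_1}$ using the (mg) property.

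First I fix $\bbeta\in\N_0^N$ and $\nu>0$ satisfying the linking relation $\bs\cdot\bbeta=s_0\nu+1/k_1$ required by Lemma~\ref{lema51prima}. Under the standing assumptions such a pair exists; for instance one may pick a coordinate $j$ with $s_j>0$, take $\beta_j$ large enough that $\bs\cdot\bbeta>1/k_1$, set the remaining components of $\bbeta$ to zero, and define $\nu=(\bs\cdot\bbeta-1/k_1)/s_0$. Applying the lemma then produces $r\in(0,R)$ and constants $C,H,h>0$ such that
$$\|u_n(\bz)\|_{\brho}\ll \frac{CH^n}{M_n^{\nu s_0}}\,\Theta^{(n\bbeta)}(h\brho),\qquad n\in\N_0,\ \bz\in D^N_r.$$
Reading off the coefficient of $\brho^{\boldsymbol{0}}$ on both sides and using $M_{\boldsymbol{0}}^{\bs}=1$ gives
$$|u_n(\bz)|\le \frac{CH^n M_{n\bbeta}^{\bs}}{M_n^{\nu s_0}},\qquad \bz\in D^N_r,\ n\in\N_0.$$

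To finish I dominate $M_{n\bbeta}^{\bs}$ in terms of $M_n$. Iterating $M_{a+b}\le B^{a+b}M_aM_b$ along the decomposition $M_{n\beta_j}\le B^{n\beta_j} M_{n(\beta_j-1)}M_n\le\cdots$ yields, for each $j$, an estimate $M_{n\beta_j}\le B_j^n M_n^{\beta_j}$ with $B_j>0$ depending only on $B$ and $\beta_j$; multiplying over $j$ and raising to the corresponding powers $s_j$ gives $M_{n\bbeta}^{\bs}\le K^n M_n^{\bs\cdot\bbeta}$ for some $K>0$. Substituting the relation $\bs\cdot\bbeta-\nu s_0=1/k_1$ produces
$$|u_n(\bz)|\le \tilde{C}\tilde{H}^n M_n^{1/k_1},\qquad \bz\in D^N_r,\ n\in\N_0,$$
with $\tilde{C}=C$ and $\tilde{H}=HK$. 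Since this bound is already uniform on $D^N_r$, it holds a fortiori on any $D^N_{r'}$ with $r'\in(0,r)$. The only non-routine ingredient is the initial choice of an admissible pair $(\bbeta,\nu)$, which may require a brief case analysis when some $s_j$ vanish or when $1/k_1=0$; the remainder is pure bookkeeping around Lemma~\ref{lema51prima} and the (mg) iteration.
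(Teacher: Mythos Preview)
Your proposal is correct and follows essentially the same route as the paper: evaluate the formal norm bound from Lemma~\ref{lema51prima} at $\brho=\boldsymbol{0}$ to extract $|u_n(\bz)|\le CH^nM_{n\bbeta}^{\bs}/M_n^{\nu s_0}$, then use (mg) to bound $M_{n\bbeta}^{\bs}\le K^nM_n^{\bs\cdot\bbeta}$ and invoke $\bs\cdot\bbeta-\nu s_0=1/k_1$. The only cosmetic difference is that the paper packages your (mg) iteration as the separate Lemma~\ref{lema2prima} ($M_{dn}\le C'^n M_n^d$), while you spell it out inline; your added remark on how to actually produce an admissible pair $(\bbeta,\nu)$ is a useful clarification that the paper leaves implicit.
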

\begin{proof}
In view of Lemma~\ref{lema51prima} and Lemma~\ref{lema2prima} we get for all $n\in\N_0$ that
\begin{align*}
|u_n(t)|&=\left\|u_n(t)\right\|_{\boldsymbol{0}}\le\frac{C H^n}{(M_n)^{\nu s_0}}\Theta^{(n\bbeta)}(\boldsymbol{0})=\frac{C H^n}{(M_n)^{\nu s_0}}M_{n\bbeta}^{\bs}\le \frac{C C'^{nN}H^n}{(M_n)^{\nu s_0}}M_n^{\bbeta\cdot {\bs}}\\
&=CC'^nH^n(M_n)^{1/k_1}=\tilde{C}\tilde{H}^n(M_n)^{1/k_1},
\end{align*}
which leads to the result.
\end{proof}

\section{Moment equations with constant coefficients --- two-dimensional case}\label{sectwo}
In the special case of the equations with constant coefficients we attain an analogous result as in the previous section, with the assumption that $(M_n)_{n\ge 0}$ is a (lc)-sequence replaced by the weaker condition ($\star$). It is also worth emphasizing that in this special case we do not need to assume that the sequence $m_0=(m_0(n))_{n\ge0}$ is regular.

First, we consider equations in two complex variables $(t,z)\in\C^2$.

Let $(M_n)_{n\ge0}$ be a sequence of positive real numbers with $M_0=1$. Let $m_0=(m_0(n))_{n\ge0}$ and $m_1=(m_1(n))_{n\ge0}$ be two $(M_n)$-sequences of orders $s_0,s_1\ge0$, respectively.
We consider the Cauchy problem
\begin{equation}\label{e5}
\left\{ \begin{aligned}
             P(\partial_{m_0,t},\partial_{m_1,z})u(t,z)&=\hat{f}(t,z)\\
						 \partial_{m_0,t}^ju(0,z)&=0,\quad j=0,\ldots,M-1,
             \end{aligned}
   \right.
\end{equation}
with
\begin{equation}\label{e6}
P(\partial_{m_0,t},\partial_{m_1,z})=\partial_{m_0,t}^{K}-\sum_{j=1}^KP_j(\partial_{m_1,z})\partial_{m_0,t}^{K-j}
\end{equation}

As before, we denote by $k_1$ the first positive slope of the Newton polygon associated with $P$.

\begin{theo}\label{theo2}
Let $(M_n)_{n\ge0}$ be an increasing sequence of positive real numbers with $M_0=1$, which satisfies (mg) and ($\star$). Let $\hat{f}(t,z)$ be of $(M_n)$-order $1/k_1>0$. Then the formal solution $\hat{u}(t,z)$ of (\ref{e5}) is of $(M_n)$-order $1/k_1$.
\end{theo}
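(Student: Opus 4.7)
I plan to exploit the constant coefficients and zero initial data of (\ref{e5}) by deriving an explicit formal Neumann-series expansion for $\hat u$, and then estimating each term using only (mg), ($\star$), and the monotonicity of $(M_n)$. This bypasses Lemma~\ref{lema41prima} (which requires (lc)) and the regularity assumption on $m_0$ used in Lemma~\ref{lema51prima} and Theorem~\ref{theo1}. Rewriting (\ref{e5}) as $\partial_{m_0,t}^K u = f + Au$ with $A=\sum_{j=1}^K P_j(\partial_{m_1,z})\partial_{m_0,t}^{K-j}$, inverting the leading $\partial_{m_0,t}^K$ against the zero initial data via the formal antiderivative $\partial_{m_0,t}^{-K}$, and using $\partial_{m_0,t}^{-K}P_j\partial_{m_0,t}^{K-j}=P_j\partial_{m_0,t}^{-j}$, I obtain for $n\ge K$
\[
u_n(z)=\sum_{k\ge 0}\sum_{\substack{(j_1,\ldots,j_k)\in\{1,\ldots,K\}^k\\ J\le n-K}}\frac{m_0(n-K-J)}{m_0(n)}\,P_{j_1}(\partial_{m_1,z})\cdots P_{j_k}(\partial_{m_1,z})\,f_{n-K-J}(z),
\]
with $J:=j_1+\cdots+j_k$; the $k$-sum is finite since each factor $\partial_{m_0,t}^{-j}$ raises the $t$-degree by $j\ge 1$.

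\textbf{Termwise estimate.} By Lemma~\ref{lema44prima}, there exist $r\in(0,R)$ and $\tilde A,\tilde B,h>0$ such that $\sup_{z\in D_r^1}\|f_m(z)\|_\rho\ll\tilde A\tilde B^m\Theta^{(0)}(h\rho)M_m^{1/k_1}$, the constant $\tilde B$ being freely enlargeable. Iterating Lemma~\ref{lema43prima} on each $P_{j_i}=\sum_\alpha a_{j_i,\alpha}\partial_{m_1,z}^\alpha$, together with the trivial monotonicity bound $\Theta^{(\gamma+\alpha)}(h\rho)\ll\Theta^{(\gamma+d_{j_i})}(h\rho)$ for $\alpha\le d_{j_i}=\deg P_{j_i}$, yields
\[
\|P_{j_1}\cdots P_{j_k}f_m(z)\|_\rho\ll\tilde A\tilde B^m M_m^{1/k_1}\prod_{i=1}^k C_{j_i}(h)\,\Theta^{(\sum_i d_{j_i})}(h\rho),\qquad C_j(h):=\sum_\alpha|a_{j\alpha}|h^\alpha.
\]
Evaluation at $\rho=0$ replaces the last $\Theta$ by $M_{\sum d_{j_i}}^{s_1}$. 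The Newton polygon gives $d_j/j\le(1/k_1+s_0)/s_1$, so $\sum d_{j_i}\le((1/k_1+s_0)/s_1)\,J$, and iterated (mg) produces $M_{\sum d_{j_i}}^{s_1}\le C_1^J M_J^{1/k_1+s_0}$ for some $C_1>0$.

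\textbf{Global bound.} From the definition of $(M_n)$-sequence alone (no regularity needed), $m_0(n-K-J)/m_0(n)\le\mathfrak{A}^{n-K-J}\mathfrak{a}^{-n}(M_{n-K-J}/M_n)^{s_0}$. Applying ($\star$) together with monotonicity in the form $M_n\ge b^nM_{n-K-J}M_{K+J}\ge b^nM_{n-K-J}M_J$ gives $(M_{n-K-J}M_J)^{s_0+1/k_1}\le(M_n/b^n)^{s_0+1/k_1}$. Assembling everything,
\[
|u_n(z)|\le\tilde A\,M_n^{1/k_1}\Bigl(\frac{\tilde B\mathfrak{A}}{\mathfrak{a}\,b^{s_0+1/k_1}}\Bigr)^n\sum_{k\ge 0}R(h)^k,\qquad R(h):=\sum_{j=1}^K C_j(h)\,C_1^j(\tilde B\mathfrak{A})^{-j}.
\]
Enlarging $\tilde B$ so that $R(h)<1$ makes the geometric sum finite and proves $|u_n(z)|\le\tilde C\tilde H^n M_n^{1/k_1}$ on $D_r^1$ with $\tilde H=\tilde B\mathfrak{A}/(\mathfrak{a}\,b^{s_0+1/k_1})$, which is the theorem's claim.

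\textbf{Main obstacle.} The technical delicacy is arranging $R(h)<1$, which is possible because $\tilde B$ can be freely enlarged without affecting Lemma~\ref{lema44prima}'s conclusion (at the cost of a larger $\tilde H$ in the final bound, which is immaterial). The essential feature permitting one to sidestep (lc) and the regularity of $m_0$ is that iterated application of Lemma~\ref{lema43prima} produces a single $\Theta^{(\sum d_{j_i})}(h\rho)$, whose $\rho=0$ coefficient is simply $M_{\sum d_{j_i}}^{s_1}$, so no coefficient-wise comparison between $\Theta^{(\cdot)}$'s at different shifts is required -- neutralizing the role of Lemma~\ref{lema41prima}. Symmetrically, the formal antiderivative $\partial_{m_0,t}^{-K}$ collapses all $m_0$-dependence into the single ratio $m_0(n-K-J)/m_0(n)$, so no telescoping products of consecutive ratios $m_0(i)/m_0(i-1)$ (which would demand the regularity of $m_0$) enter the argument.
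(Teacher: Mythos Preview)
Your proof is correct and takes a genuinely different route from the paper's.

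The paper expresses the formal solution through the polynomials $g_n(\zeta)$ satisfying the linear recurrence $g_n=\sum_{j=1}^K P_j(\zeta)g_{n-j}$, solves this recurrence via the characteristic equation $P(\ell,\zeta)=0$, and extracts from the asymptotics of the algebraic roots $\ell_\alpha(\zeta)\sim\ell_\alpha\zeta^{q_\alpha}$ the relation $k_1=1/(q^\star s_1-s_0)$ with $q^\star=\max_\alpha q_\alpha$. This yields the degree bound $\deg_\zeta g_n\le q^\star n+a$ together with a global coefficient bound $\sum_\alpha|a_{\alpha n}|\le AB^n$, after which Proposition~\ref{prop1} and Lemma~\ref{le:11} give the estimate on $u_\iota$. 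In contrast, you bypass the characteristic equation entirely by unfolding the Neumann series and estimating each summand via the formal-norm machinery of Section~\ref{secformalnorms}; the Newton-polygon information enters directly as $d_j/j\le(1/k_1+s_0)/s_1$, and the monotone replacement $\Theta^{(\gamma+\alpha)}\ll\Theta^{(\gamma+d_j)}$ lets you track the cumulative $\partial_{m_1,z}$-degree $\sum_i d_{j_i}$ without ever invoking Lemma~\ref{lema41prima} (hence without (lc)). The paper's approach is more algebraic and yields finer structural information about the $g_n$ (relevant, e.g., for summability questions), whereas yours is purely combinatorial and stays closer to the tools already developed for Theorem~\ref{theo1}.

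One small imprecision: the step ``iterated (mg) produces $M_{\sum d_{j_i}}^{s_1}\le C_1^J M_J^{1/k_1+s_0}$'' actually uses both (mg) and~($\star$), since $Q=(1/k_1+s_0)/s_1=\max_j d_j/j$ is rational but need not be an integer; this is exactly the content of Lemma~\ref{le:11}. Both hypotheses are assumed, so the inequality stands---only the attribution should be adjusted.
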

\begin{proof}
Let us write $\hat{f}(t,z)=\sum_{n\ge0}\frac{f_n(z)}{m_0(n)}t^n$. Then the formal solution of (\ref{e5}) is given by (see \cite[Proposition 6]{M}) 
$$\hat{u}(t,z)=\sum_{n\ge0}(\partial_{m_0,t}^{-1})^{n+1}g_n(\partial_{m_1,z})\hat{f}(t,z)=\sum_{\iota\ge0}\frac{t^\iota}{m_1(\iota)}\sum_{n=0}^{\iota-1}g_n(\partial_{m_1,z})f_{\iota-n-1}(z),$$
where $g_n(\zeta)$ is a polynomial satisfying the difference equation
$$g_n(\zeta)=\sum_{j=1}^{K}P_j(\zeta)g_{n-j}(\zeta),\quad n\ge K,$$
and 
$$g_0(\zeta)\equiv\ldots\equiv g_{K-2}(\zeta)\equiv 0,\quad g_{K-1}(\zeta)\equiv 1.$$
Let us denote $u_\iota(z):=\sum_{n=0}^{\iota-1}g_n(\partial_{m_1,z})f_{\iota-n-1}(z)$. We have $\hat{u}(t,z)=\sum_{\iota\ge0}\frac{t^\iota}{m_0(\iota)}u_{\iota}(z)$.

The classical theory of difference equations determines that 
\begin{equation}\label{e7}
g_n(\zeta)=\sum_{\alpha=1}^{k}\sum_{\beta=1}^{n_\alpha}c_{\alpha\beta}(\zeta)\frac{n!}{(n+1-\beta)!}\ell_{\alpha}^n(\zeta),
\end{equation}
where $c_{\alpha\beta}(\zeta)$ are algebraic functions, and $\ell_1(\zeta),\ldots,\ell_k(\zeta)$ are roots of the characteristic equation $P(\ell,\zeta)=0$ with multiplicities $n_1,\ldots,n_k$, respectively. The fact that $\ell_\alpha(\zeta)$ is an algebraic function for every $\alpha=1,\ldots,k$ entails the existence of $\ell_\alpha\in\C^{\star}$ and $q_\alpha\in\mathbb{Q}_{+}$ such that
$$\lim_{\zeta\to\infty}\frac{\ell_\alpha(\zeta)}{\zeta^{q_{\alpha}}}=\ell_\alpha.$$
The definition of the Newton polygon associated to $P$, and $k_1$ yield
$$k_1=\frac{1}{q^{\star}s_1-s_0},\hbox{ where } q^{\star}=\max\{q_1,\ldots,q_k\}.$$
In view of (\ref{e7}) there exist $a\in\N$ and $A,B>0$ such that one can write
$$g_n(\zeta)=\sum_{\alpha=0}^{\lfloor q^\star n+a\rfloor}a_{\alpha n}\zeta^\alpha,$$
where the coefficients $a_{\alpha n}$ satisfy
$$
\sum_{\alpha=0}^{\lfloor q^\star n+a\rfloor}|a_{\alpha n}|\leq A B^n\quad\textrm{for every}\quad n\in\N_0.
$$

Given a convergent power series $\varphi(z)\ll C\sum_{n=0}^{\infty} (Dz)^n$, for some $C,D>0$, by Proposition~\ref{prop1} and the assumption that $(M_n)_{n\ge0}$ is an increasing sequence, one has
\begin{multline*}
q_n(\partial_{m_1,z})\varphi(z)=\sum_{\alpha\le q^{\star}n+a}a_{\alpha n}\partial_{m_1,z}^{\alpha}\varphi(z)\ll \sum_{\alpha\le q^{\star}n+a}|a_{\alpha n}|CH^{\alpha}M_\alpha^{s_1}\sum_{n\ge0}(D'z)^k\\
\ll \big(\sum_{\alpha\le q^{\star}n+a}|a_{\alpha n}|\big) C H^{q^{\star}n+a}M_{\lfloor q^\star n+a\rfloor}^{s_1}\sum_{k\ge 0}(D'z)^k\ll C\tilde{C} \tilde{H}^n M_{\lfloor q^{\star}n\rfloor}^{s_1}\sum_{k\ge0}(D'z)^k.
\end{multline*}
Since $q^\star\in\mathbb{Q}_+$, by Lemma \ref{le:11}
$$q_n(\partial_{m_1,z})\varphi(z)\ll C\tilde{C}_2\tilde{H}_2^n M_n^{q^{\star}s_1}\sum_{k\ge0}(D'z)^k,\quad n\ge 0,$$
for some $\tilde{C}_2,\tilde{H}_2>0$.

Since $\hat{f}(t,z)$ is of $(M_n)$-order $1/k_1$, there exist $A,B,D>0$ such that
$$f_n(z)\ll A B^n (M_n)^{1/k_1}\sum_{k\ge0}(Dz)^k,$$
in a common neighborhood of the origin for all $n\in\N_0$. Hence, using ($\star$) and
the monotonicity of $(M_n)_{n\ge0}$, we get
\begin{align*} 
u_{\iota}(z)={}&\sum_{n=0}^{\iota-1}q_n(\partial_{m_1,z})f_{\iota-n-1}(z)\ll \sum_{n=0}^{\iota-1}A B^{\iota-n-1}(M_{\iota-n-1})^{\frac{1}{k_1}}\tilde{C}_2(\tilde{H}_2)^nM_n^{q^{\star}s_1}\sum_{k\ge0}(D'z)^k\\
\ll{}& A'(B')^{\iota}\left(\sum_{n=0}^{\iota-1}(M_{\iota-n-1})^{\frac{1}{k_1}}(M_n)^{q^{\star}s_1}\right)\sum_{k\ge 0}(D' z)^k\\
\ll{}& A'(B')^{\iota}\left(\sum_{n=0}^{\iota-1}(M_{\iota-n-1})^{\frac{1}{k_1}}(M_n)^{\frac{1}{k_1}+s_0}\right)\sum_{k\ge 0}(D' z)^k\\
\ll{}& A' (B'_1)^{\iota}\left(\sum_{n=0}^{\iota-1}(M_{\iota-1})^{\frac{1}{k_1}}(M_n)^{s_0}\right)\sum_{k\ge 0}(D' z)^k\ll A'_2 (B'_2)^{\iota} (M_{\iota})^{\frac{1}{k_1}}(M_\iota)^{s_0}\left(\sum_{k\ge0}(D'z)^k\right),
\end{align*}
for some $A'_2,B'_2>0$. This entails the existence of $A,B>0$ and $r>0$ such that
$$\frac{u_\iota(z)}{m_1(\iota)}\le A B^\iota (M_\iota)^{\frac{1}{k_1}},\quad \iota\ge0,\quad z\in D_r.$$
\end{proof}

\section{Moment equations with constant coefficients --- multidimensional case}\label{secsev}
The multidimensional case $(t,\bz)\in\C^{1+N}$ is more complex and needs some more effort. In particular, our reasoning rests on Lemma~\ref{le:12}, which only holds when certain parameters are non-negative rational numbers $p/q$. Therefore, we achieve the main result under the assumption that the orders $s_1,\dots,s_N$ are rational numbers.

More precisely, we consider the Cauchy problem for linear generalized moment-PDE equations with constant coefficients
\begin{equation}\label{e8}
\left\{ \begin{aligned}
             P(\partial_{m_0,t},\partial_{\bm,\bz})u(t,\bz)&=\hat{f}(t,\bz)\\
						 \partial_{m_0,t}^ju(0,\bz)&=0,\quad j=0,\ldots,K-1,
             \end{aligned}
   \right.
\end{equation}
with $t\in\C$, $\bz=(z_1,\ldots,z_N)\in\C^N$ for some $N\ge 1$, and where $m_0,m_1,\ldots,m_N$ are $(M_n)$-sequences of orders $s_0,s_1,\ldots,s_N\ge 0$, respectively. We write $\bs=(s_1,\ldots,s_N)$, $\bm=(m_1,\ldots,m_N)$, and denote $\partial_{\bm,\bz}^{\balpha}=\partial_{m_1,z_1}^{\alpha_1}\cdots \partial_{m_N,z_N}^{\alpha_N}$, for every $\balpha=(\alpha_1,\ldots,\alpha_N)\in\N_0^{N}$. We put 
\begin{equation}\label{e9}
P(\partial_{m_0,t},\partial_{\bm,\bz})=\partial_{m_0,t}^{K}-\sum_{(j,\balpha)\in\Lambda}a_{j,\balpha}\partial_{m_0,t}^{j}\partial_{\bm,\bz}^{\balpha},
\end{equation}
where $\Lambda\subseteq\{0,\ldots,K-1\}\times\N_0^{N}$ is a finite subset of indices.

\begin{theo}\label{prop4}
Let $(M_n)_{n\ge0}$ be an increasing sequence of positive real numbers with $M_0=1$, which satisfies (mg) and ($\star$). Let $\hat{f}(t,\bz)$ be of $(M_n)$-order $1/k_1>0$. Additionally we assume that $s_1,\dots,s_N\in\overline{\Q}_+$. Then, the formal solution $\hat{u}(t,\bz)$ of (\ref{e8}) is of $(M_n)$-order $1/k_1$.
\end{theo}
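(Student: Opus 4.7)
The plan is to extend the argument for Theorem~\ref{theo2} to $N$ spatial variables. First, as in the one-variable case, the formal solution of (\ref{e8}) admits an explicit expression: writing $\hat f(t,\bz)=\sum_{n\ge 0}\frac{f_n(\bz)}{m_0(n)}t^n$ and grouping the spatial part of $P$ as
$\tilde P_i(\bzeta):=\sum_{(K-i,\balpha)\in\Lambda}a_{K-i,\balpha}\bzeta^{\balpha}$, a Neumann-series computation analogous to \cite[Proposition 6]{M} yields
$$\hat u(t,\bz)=\sum_{\iota\ge 0}\frac{t^\iota}{m_0(\iota)}u_\iota(\bz),\qquad u_\iota(\bz)=\sum_{n=0}^{\iota-1}g_n(\partial_{\bm,\bz})f_{\iota-n-1}(\bz),$$
where $g_n(\bzeta)$ is a polynomial in $\bzeta\in\C^N$ determined by the recursion $g_n=\sum_{i=1}^{K}\tilde P_i\,g_{n-i}$ for $n\ge K$, with $g_0=\dots=g_{K-2}=0$ and $g_{K-1}=1$.

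Second, I analyse the polynomials $g_n$. Since the coefficients of $P$ are constant, $\hbox{ord}_t(a_{j\balpha})=0$ and $q_{j\balpha}=K-j$, so the Newton-polygon definition of $k_1$ specialises to
$$q^\star\;:=\;\frac{1}{k_1}+s_0\;=\;\max_{(j,\balpha)\in\Lambda}\frac{\balpha\cdot\bs}{K-j}.$$
Iterating the recursion monomially, one obtains $g_n(\bzeta)=\sum_{\balpha\in S_n}a_{\balpha,n}\bzeta^{\balpha}$ with $\sum_{\balpha\in S_n}|a_{\balpha,n}|\le AB^n$, $|\balpha|\le Cn$, and the weighted estimate $\balpha\cdot\bs\le n q^\star$ on $S_n$; the latter holds because every monomial of $g_n$ is a product $\bzeta^{\balpha_{l_1}}\cdots\bzeta^{\balpha_{l_r}}$ of monomials drawn from the $\tilde P_{i_l}$'s (each factor satisfying $\balpha_l\cdot\bs\le i_l q^\star$ by definition of $q^\star$) with $\sum_l i_l=n-K+1$. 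Applying Proposition~\ref{prop1} monomial by monomial to a majorant $\varphi\ll C_0\sum_{\bgamma}(D_0\bz)^{\bgamma}$ gives
$$g_n(\partial_{\bm,\bz})\varphi(\bz)\ll\Bigl(\sum_{\balpha\in S_n}|a_{\balpha,n}|H^{|\balpha|}M_\balpha^{\bs}\Bigr)C_0\sum_{\bgamma}(D_0'\bz)^{\bgamma}.$$
Here the rationality hypothesis enters: since $\bs\in\overline{\Q}_+^N$ and $q^\star\in\Q$, Lemma~\ref{le:12} supplies a uniform bound $M_\balpha^{\bs}\le\tilde C\tilde H^n M_n^{q^\star}$ on $S_n$, so that the bracket is $\le A_1 B_1^n M_n^{q^\star}$.

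Third, substituting $\varphi=f_{\iota-n-1}$ (majorised by $A_2 B_2^{\iota-n-1}M_{\iota-n-1}^{1/k_1}\sum_{\bgamma}(D\bz)^{\bgamma}$ by the assumption on $\hat f$) and summing over $n=0,\dots,\iota-1$, one proceeds exactly as in the proof of Theorem~\ref{theo2}: property ($\star$) gives $M_{\iota-n-1}^{1/k_1}M_n^{1/k_1}\le b^{-(\iota-1)/k_1}M_{\iota-1}^{1/k_1}$, (mg) absorbs geometric constants, and monotonicity of $(M_n)$ turns $\sum_{n=0}^{\iota-1}M_n^{s_0}$ into $O(\iota)\,M_{\iota-1}^{s_0}$. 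The upshot is $u_\iota(\bz)\ll A_3 B_3^\iota M_\iota^{1/k_1+s_0}\sum_{\bgamma}(D\bz)^{\bgamma}$ on a sufficiently small polydisc; since $m_0$ is an $(M_n)$-sequence of order $s_0$ we have $m_0(\iota)\ge\mathfrak{a}^\iota M_\iota^{s_0}$, hence $u_\iota(\bz)/m_0(\iota)\le A_4 B_4^\iota M_\iota^{1/k_1}$, which is precisely the claimed $(M_n)$-order $1/k_1$ estimate. The main obstacle is Step~2: extracting the weighted support bound $\balpha\cdot\bs\le n q^\star$ for $g_n$ in $N$ variables from the Newton-polygon data, and then invoking Lemma~\ref{le:12} to collapse $M_\balpha^{\bs}$ down to $M_n^{q^\star}$ uniformly on that support; it is precisely at this point that the restriction $\bs\in\overline{\Q}_+^N$ becomes indispensable.
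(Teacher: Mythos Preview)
Your proof is correct and reaches the same endpoint as the paper, but the route you take to control the polynomials $g_n$ is genuinely different. The paper does not analyse the recursion combinatorially; instead it writes $g_n$ in the closed form (\ref{e11}) coming from the classical theory of linear difference equations, reduces to a single spatial variable via the substitution $\overline\ell_l(\xi)=\ell_l(\xi^{s_1},\dots,\xi^{s_N})$, and reads off both the weighted support bound $\balpha\cdot\bs\le\overline q^\star n+a$ and the coefficient bound $\sum_{\balpha}|a_{n\balpha}|\le AB^n$ from the growth of the algebraic functions $\overline\ell_l$ at infinity. Your inductive argument on the recursion yields the sharper bound $\balpha\cdot\bs\le(n-K+1)q^\star$ without any appeal to characteristic roots, and delivers the $\ell^1$ coefficient bound and the total-degree bound $|\balpha|\le Cn$ simultaneously; it also makes the rationality of $q^\star$ transparent from the Newton-polygon formula $q^\star=\max_{(j,\balpha)\in\Lambda}\frac{\balpha\cdot\bs}{K-j}$, whereas in the paper this is implicit in the algebraicity of $\overline\ell_l(\xi)$ under the rational substitution. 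The paper's approach has the merit of mirroring the two-dimensional argument of Theorem~\ref{theo2} line by line; yours is more elementary and self-contained. After the support bound is in hand, the two proofs coincide: Lemma~\ref{le:12} applied componentwise, property~$(\star)$, monotonicity, and finally Lemma~\ref{le:11} (which you should also cite, since it is the step converting $M_{\lfloor nq^\star\rfloor}$ back to $M_n^{q^\star}$) collapse $M_\balpha^{\bs}$ to a constant times $M_n^{q^\star}$, and the conclusion follows exactly as in Theorem~\ref{theo2}.
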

\begin{proof}
Let us write $\hat{f}(t,\bz)=\sum_{n\ge0}\frac{f_n(\bz)}{m_0(n)}t^n$. As in the previous section, the formal solution of (\ref{e8}) can be written in the form
$$\hat{u}(t,\bz)=\sum_{n\ge0}(\partial_{m_0,t}^{-1})^{n+1}g_n(\partial_{\bm,\bz})\hat{f}(t,\bz)=\sum_{\iota\ge0}\frac{t^\iota}{m_0(\iota)}\sum_{n=0}^{\iota-1}g_n(\partial_{\bm,\bz})f_{\iota-n-1}(\bz),$$
where $g_n(\bzeta)=g_n(\zeta_1,\ldots,\zeta_N)$ is a polynomial satisfying the difference equation
$$g_n(\bzeta)=\sum_{j=1}^{K}P_j(\bzeta)g_{n-j}(\bzeta),\quad n\ge K,$$
and 
$$g_0(\bzeta)\equiv\ldots\equiv g_{K-2}(\bzeta)\equiv 0,\quad g_{K-1}(\bzeta)\equiv 1.$$
Let us denote $u_\iota(\bz):=\sum_{n=0}^{\iota-1}g_n(\partial_{\bm,\bz})f_{\iota-n-1}(\bz)$. We have $\hat{u}(t,\bz)=\sum_{\iota\ge0}\frac{t^\iota}{m_0(\iota)}u_{\iota}(\bz)$.

The classical theory of difference equations determines that 
\begin{equation}\label{e11}
g_n(\bzeta)=\sum_{l=1}^{k}\sum_{m=1}^{n_l}c_{lm}(\bzeta)\frac{n!}{(n+1-m)!}\ell_{l}^n(\bzeta),
\end{equation}
where $c_{lm}(\bzeta)$ are algebraic functions, and $\ell_1(\bzeta),\ldots,\ell_k(\bzeta)$ are roots of the characteristic equation $P(\ell,\bzeta)=0$ with multiplicities $n_1,\ldots,n_k$, respectively. The function $\ell_l(\bzeta)=\ell_l(\zeta_1,\ldots,\zeta_N)$ is an algebraic function for every $l=1,\ldots,k$, and $\bzeta=(\zeta_1,\ldots,\zeta_N)\in\C^N$. We write $\overline{\ell}_l(\xi)=\ell_l(\xi^{s_1},\ldots,\xi^{s_N})$ for $\xi\in\C$. Then, $\overline{\ell}_l(\xi)$ is a holomorphic function for sufficiently large $|\xi|$ with a moderate growth at infinity, i.e.
$$\lim_{\xi\to\infty}\frac{\overline{\ell}_l(\xi)}{\zeta^{\overline{q}_{l}}}=\ell_l,$$
for some $\ell_l\in\C\setminus\{0\} $ and $\overline{q}_l\in\overline{\mathbb{R}}_{+}$.
The first positive slope of the Newton polygon is given by $k_1=(\overline{q}^{\star}-s_0)^{-1},$ where $\overline{q}^{\star}=\max\{\overline{q}_1,\ldots,\overline{q}_k\}$. 

In view of (\ref{e11}) there exists $a\in\N$ such that one can write every polynomial $g_n(\bzeta)$ as
$$g_n(\bzeta)=\sum_{\balpha\in\N_0^N\atop \balpha\cdot \bs\le \overline{q}^{\star}n+a}a_{n\balpha}\bzeta^{\balpha}$$
and there exist $A,B>0$ such that
$$\sum_{\balpha\in\N_0^N\atop \balpha\cdot \bs\le \overline{q}^{\star}n+a}|a_{n\balpha}|\leq AB^n\quad\textrm{for every}\quad n\in\N_0.$$

Given a convergent power series $\varphi(\bz)\ll C\sum_{\bbeta\in\N_0^N} (D\bz)^{\bbeta}$, for some $C,D>0$, by Proposition~\ref{prop1}, Lemma \ref{le:12} and the assumption that $(M_n)_{n\ge0}$ is an increasing sequence satisfying (mg) and ($\star$), one has
\begin{multline*}
q_n(\partial_{\bm,\bz})\varphi(\bz)=\sum_{\balpha\in\N_0^N\atop\balpha\cdot \bs\le \overline{q}^{\star}n+a}a_{n\balpha}\partial_{\bm,\bz}^{\balpha}\varphi(\bz)\ll \sum_{\balpha\in\N_0^N\atop \balpha\cdot \bs\le \overline{q}^{\star}n+a}|a_{n\balpha}|CH^{|\balpha|}M_{\balpha}^{\bs}\sum_{\bbeta\in\N_0^N}(D'\bz)^{\bbeta}\\
\ll \sum_{\balpha\in\N_0^N\atop \balpha\cdot \bs\le \overline{q}^{\star}n+a}|a_{n\balpha}|C\tilde{C}\tilde{H}^{|\balpha|}M_{\alpha_1 s_1}\cdots M_{\alpha_N s_N}
\sum_{\bbeta\in\N_0^N}(D'\bz)^{\bbeta}\\
\ll \big(\sum_{\balpha\in\N_0^N\atop \balpha\cdot \bs\le \overline{q}^{\star}n+a}|a_{n\balpha}|\big)C\bar{C} \bar{H}^{q^{\star}n+a}M_{\lfloor q^\star n+a\rfloor}\sum_{\bbeta\in\N_0^N}(D'\bz)^{\bbeta}\ll C \tilde{C} \tilde{H}^n M_{n}^{\overline{q}^{\star}}\sum_{\bbeta\in\N_0^N}(D'\bz)^{\bbeta}.
\end{multline*}
Following analogous steps as those at the end of Theorem \ref{theo2}, we complete the proof. 
\end{proof}

\section{Some technical auxiliary lemmas on sequences of numbers}\label{secauxlemas}
In the last section we collect the auxiliary lemmas on sequences of numbers
$(M_n)_{n\ge0}$ which are used in the paper.

\begin{lemma}
Let $(M_n)_{n\ge0}$ be an (lc) sequence. For every $n\in\N$ and $p\in\N_0$ with $p\le n$ one has
$$\frac{M_n}{M_{n-p}}\le\left(\frac{M_n}{M_{n-1}}\right)^p.$$
\end{lemma}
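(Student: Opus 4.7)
The key observation is that the (lc) condition $(M_n)^2 \le M_{n-1}M_{n+1}$ is equivalent to saying that the ratio sequence $a_k := M_k/M_{k-1}$ is non-decreasing in $k$. Indeed, rewriting (lc) as $M_n/M_{n-1} \le M_{n+1}/M_n$ gives exactly $a_n \le a_{n+1}$.

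Given this, the plan is to first telescope the quotient $M_n/M_{n-p}$ as a product of consecutive ratios:
\[
\frac{M_n}{M_{n-p}} \;=\; \prod_{k=n-p+1}^{n} \frac{M_k}{M_{k-1}} \;=\; \prod_{k=n-p+1}^{n} a_k.
\]
Then, since each index $k$ in the product satisfies $k \le n$, monotonicity of $(a_k)$ yields $a_k \le a_n$ for every such $k$, so the product is bounded above by $a_n^p = (M_n/M_{n-1})^p$. This gives the desired inequality.

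The only step requiring a bit of care is the boundary case $p = 0$, where the empty product convention gives $M_n/M_n = 1 = (M_n/M_{n-1})^0$, so the inequality trivially holds. For $p \ge 1$ the telescoping and monotonicity argument above applies directly. There is no real obstacle here; the entire argument is a one-line consequence of rewriting (lc) as monotonicity of the ratio sequence.
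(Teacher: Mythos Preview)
Your proof is correct and follows essentially the same approach as the paper: both observe that (lc) is equivalent to monotonicity of the ratio sequence $M_k/M_{k-1}$, telescope $M_n/M_{n-p}$ into a product of consecutive ratios, and bound each factor by $M_n/M_{n-1}$. Your explicit treatment of the boundary case $p=0$ is a minor addition not present in the paper, but the core argument is identical.
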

\begin{proof}
Since $(M_n)_{n\ge 0}$ is (lc), the sequence of quotients $(M_n/M_{n-1})_{n\ge 1}$ is increasing. Hence,
$$\frac{M_n}{M_{n-p}}=\frac{M_n}{M_{n-1}}\frac{M_{n-1}}{M_{n-2}}\cdots\frac{M_{n+1-p}}{M_{n-p}}\le\left(\frac{M_n}{M_{n-1}}\right)^p,$$
for all $n\in\N$ and $p\in\N_0$ with $p\le n$.
\end{proof}

\begin{lemma} \label{lema2}
Let $(M_n)_{n\ge0}$ be an (lc) sequence. For every $n,d\in\N_0$ one has $(M_n)^d\le M_{dn}$.
\end{lemma}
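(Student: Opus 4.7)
The plan is to induct on $d$. The base cases $d=0$ and $d=1$ are immediate (using $M_0=1$, which is the standing normalization in the paper, and positivity of the sequence). For the inductive step, assume $(M_n)^d\le M_{dn}$. Then
$$(M_n)^{d+1}=(M_n)^d\cdot M_n\le M_{dn}\cdot M_n,$$
so the whole problem reduces to establishing the super-multiplicativity property $M_a M_b\le M_{a+b}$ for all $a,b\in\N_0$, applied with $a=dn$ and $b=n$.

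The key observation, which I would isolate as an intermediate step (and which is essentially stated already in the Remark following the definition of (lc)), is that the ratio sequence $(M_n/M_{n-1})_{n\ge1}$ is non-decreasing: this is literally a restatement of $(M_n)^2\le M_{n-1}M_{n+1}$. Consequently, for any $b,n\ge1$ and $1\le j\le n$ we have
$$\frac{M_{b+j}}{M_{b+j-1}}\ge\frac{M_j}{M_{j-1}},$$
and telescoping the product gives
$$\frac{M_{b+n}}{M_b}=\prod_{j=1}^{n}\frac{M_{b+j}}{M_{b+j-1}}\ge\prod_{j=1}^{n}\frac{M_j}{M_{j-1}}=\frac{M_n}{M_0}=M_n,$$
i.e.\ $M_{b+n}\ge M_n M_b$. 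Trivial cases $n=0$ or $b=0$ are handled by $M_0=1$.

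Plugging this into the inductive step with $b=dn$ yields $M_{dn}M_n\le M_{(d+1)n}$, and combined with the induction hypothesis this completes the proof. No step is really an obstacle here; the only thing to be a little careful about is the normalization $M_0=1$, which is used both to start the induction (so that $(M_n)^0=1=M_0$) and to collapse the telescoped product, and the direction of the inequality in (lc), which must be read as "ratios are non-decreasing" rather than "the sequence is convex in the usual sense."
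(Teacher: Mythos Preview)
Your proof is correct and rests on the same idea as the paper's: both use that (lc) makes the ratio sequence $(M_n/M_{n-1})_{n\ge1}$ non-decreasing, together with a telescoping product. The only difference is organizational---the paper expands $(M_n)^d$ and $M_{dn}$ directly as products of $dn$ ratios and compares them in one line, while you isolate the super-multiplicativity $M_aM_b\le M_{a+b}$ (which is exactly the content of the Remark following the definition of (lc)) and then induct on $d$.
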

\begin{proof}
It is a direct consequence of the definition of (lc) sequences. Indeed, for all $n,d\in\N_0$ one has 
$$(M_n)^d =\left(\frac{M_n}{M_{n-1}}\right)^d\left(\frac{M_{n-1}}{M_{n-2}}\right)^d\cdots\left(\frac{M_1}{M_{0}}\right)^d\le \frac{M_{dn}}{M_{dn-1}}\frac{M_{dn-1}}{M_{dn-2}}\cdots\frac{M_1}{M_{0}}= M_{dn}.$$
\end{proof}

\begin{lemma}\label{lema2prima}
Let $(M_n)_{n\ge0}$ be an (mg) sequence. For every $d\in\N$ there exists $C'>0$ such that $M_{dn}\le C'^n(M_n)^{d}$ for all $n\in\N$.
\end{lemma}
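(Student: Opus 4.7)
The plan is to iterate the moderate growth inequality $d-1$ times, peeling off one copy of $M_n$ at each step. Explicitly, writing $dn = n + (d-1)n$ and applying (mg) repeatedly gives
\begin{equation*}
M_{dn} \le B^{dn} M_n M_{(d-1)n} \le B^{dn} B^{(d-1)n} (M_n)^2 M_{(d-2)n} \le \cdots \le B^{n(2+3+\cdots+d)} (M_n)^d.
\end{equation*}
This yields the claim with $C' = B^{d(d+1)/2 - 1}$ (or any larger constant).

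A cleaner way to present this is by induction on $d$. The base case $d=1$ is trivial with $C'=1$. For the inductive step, assuming $M_{dn}\le C_d^n (M_n)^d$ for all $n$, one applies (mg) to $M_{(d+1)n} = M_{n+dn}$ to obtain
\begin{equation*}
M_{(d+1)n}\le B^{(d+1)n} M_n M_{dn}\le (B^{d+1}C_d)^n (M_n)^{d+1},
\end{equation*}
so the constant $C_{d+1} = B^{d+1}C_d$ works for $d+1$.

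There is no real obstacle here; the only point to keep in mind is that $d$ is fixed throughout (so the constant $C'$ is allowed to depend on $d$ and on the (mg) constant $B$, but not on $n$), and that the base of the exponential $C'^n$ absorbs both the factors of $B$ and any accumulated constants from the iteration.
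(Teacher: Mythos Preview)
Your proof is correct and follows essentially the same approach as the paper: the authors also apply the (mg) condition recursively and record the constant $C'=B^{d(d+1)/2}$, which differs from your $B^{d(d+1)/2-1}$ only by a harmless extra factor of $B$.
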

\begin{proof}
It is a direct consequence of (mg) condition satisfied by $(M_n)_{n\ge0}$ applied recursively. We take $C'=B^{(d+1)d/2}$ where $B$ is the constant associated to the (mg) property of $(M_n)_{n\ge0}$.
\end{proof}

\begin{lemma}
Let $(M_n)_{n\ge0}$ be an (lc) sequence. For all $q,p\in\N$ with $p\le q$ we have
$$M_{p}\le \left(\frac{M_q}{M_{q-1}}\right)^{p}.$$
\end{lemma}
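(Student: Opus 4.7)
The plan is a short telescoping argument. Using the standing convention from the preceding lemmas that $M_0 = 1$, I would write
$$M_p = \frac{M_p}{M_0} = \prod_{k=1}^{p} \frac{M_k}{M_{k-1}},$$
which reduces the statement to bounding each factor on the right-hand side by $M_q/M_{q-1}$.

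The key observation is that the (lc) hypothesis $M_k^2 \le M_{k-1} M_{k+1}$ is equivalent to the monotonicity statement that the sequence of successive quotients $(M_k/M_{k-1})_{k \ge 1}$ is non-decreasing. (This is the same fact exploited in the first lemma of the section.) Therefore, for every $k$ with $1 \le k \le p \le q$ we have
$$\frac{M_k}{M_{k-1}} \le \frac{M_q}{M_{q-1}}.$$

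Multiplying these $p$ inequalities yields
$$M_p = \prod_{k=1}^{p} \frac{M_k}{M_{k-1}} \le \left(\frac{M_q}{M_{q-1}}\right)^p,$$
which is precisely the claim. There is essentially no obstacle here; the only subtlety worth flagging is the use of $M_0 = 1$, which is consistent with the normalisation adopted throughout the paper and in the surrounding lemmas of this section.
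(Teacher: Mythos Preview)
Your argument is correct and essentially identical to the paper's own proof: both telescope $M_p$ into the product $\prod_{k=1}^{p} M_k/M_{k-1}$ (using $M_0=1$) and then bound each factor by $M_q/M_{q-1}$ via the monotonicity of successive quotients that (lc) provides. There is no meaningful difference in approach.
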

\begin{proof}
The proof is based on the monotonicity of the sequence of quotients $(M_n/M_{n-1})_{n\ge 1}$. For all $q,p\in\N$ with $p\le q$, one has
$$M_{p}=\frac{M_p}{M_{p-1}}\cdots \frac{M_1}{M_{0}}\le \frac{M_q}{M_{q-1}}\cdots^{(p)}\frac{M_q}{M_{q-1}}=\left(\frac{M_q}{M_{q-1}}\right)^{p}.$$
\end{proof}

\begin{lemma}\label{lema4}
Let $(M_n)_{n\ge0}$ be an (lc) and (mg) sequence. There exists $C>0$ such that 
\begin{equation}\label{e389}
\frac{M_{dn-a}}{M_{dn}}\le C^a\left(\frac{M_{n-1}}{M_n}\right)^{a},
\end{equation}
for all $d,n,a\in\N$.
\end{lemma}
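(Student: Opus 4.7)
The plan is to distinguish two regimes depending on whether $a\le(d-1)n+1$ or $a>(d-1)n+1$.

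In the first (easy) regime, rewrite $M_{dn}/M_{dn-a}$ as the telescoping product $\prod_{j=dn-a+1}^{dn}M_j/M_{j-1}$. Every index $j$ in this product satisfies $j\ge dn-a+1\ge n$. Since (lc) is equivalent to the monotonicity of the quotients $j\mapsto M_j/M_{j-1}$, each factor is at least $M_n/M_{n-1}$, so $M_{dn-a}/M_{dn}\le (M_{n-1}/M_n)^{a}$, and $C=1$ suffices.

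In the second (hard) regime, set $k:=a-(d-1)n-1\in\{1,\dots,n-1\}$, so $dn-a=n-1-k$, and split
\[
\frac{M_{dn}}{M_{dn-a}}=\frac{M_{dn}}{M_{n-1}}\cdot\frac{M_{n-1}}{M_{n-1-k}}.
\]
The first-regime argument applied to the first factor yields $M_{dn}/M_{n-1}\ge(M_n/M_{n-1})^{(d-1)n+1}$. Substituting and rearranging, the target inequality reduces to
\[
\frac{M_{n-1-k}}{M_{n-1}}\left(\frac{M_n}{M_{n-1}}\right)^{k}\le C^{a},\qquad a\ge k+1,
\]
uniformly in $d,n,a,k$. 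I plan to control the left-hand side using two tools. First, the (lc) convexity inequality $M_kM_{n-1-k}\le M_{n-1}$ (from $\log M$ being convex with $M_0=1$), which gives $M_{n-1-k}/M_{n-1}\le 1/M_k$. Second, the uniform discrete logarithmic second-difference estimate $M_{m+1}M_{m-1}/M_m^{2}\le B^{2}$ for all $m\ge 1$, obtained by comparing (mg) in the form $M_{2m}\le B^{2m}M_m^{2}$ with the (lc) lower bound $M_{2m}\ge M_m(M_{m+1}/M_m)^{m}$ (from monotonicity of quotients), and applying the instance $M_m\le (M_m/M_{m-1})^{m}$ of the preceding unnamed lemma to absorb $M_m^{1/m}$ into $M_m/M_{m-1}$. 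Iterating the resulting bound $r_{m+1}/r_m\le B^{2}$ for $k$ steps controls $(M_n/M_{n-1})^{k}$ in terms of $M_k$ and a power of $B$.

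The main obstacle is exactly this reduced estimate. A direct combination of the two ingredients yields constants that grow polynomially in $k$ and may depend on $n$; extracting a genuinely uniform $C$ requires a delicate balancing in which the factor $M_k$ furnished by the convexity bound must absorb precisely the exponential-in-$k$ growth of $(M_n/M_{n-1})^{k}$ permitted by the iterated second-difference estimate, while the constraint $a\ge k+1$ is used to fit the remaining geometric slack into the exponent $a$ of $C^{a}$. Carrying out this balance cleanly for an arbitrary (lc)-and-(mg) sequence is the crux of the argument.
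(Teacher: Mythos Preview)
Your first regime is fine and your reduction of the general $d$ to an inequality essentially equivalent to the $d=1$ case is also correct. The genuine gap is in the second regime, which for $d=1$ reads $r_n^{k}\,\dfrac{M_{n-1-k}}{M_{n-1}}\le C^{k+1}$ with $r_n:=M_n/M_{n-1}$; multiplying by $r_n$ shows this is literally the original statement (\ref{e389}) for $d=1$, $a=k+1$, so nothing has been reduced, and the whole weight falls on your two tools. Neither one, nor any combination of them, can produce a uniform $C$. The convexity bound $M_{n-1-k}/M_{n-1}\le 1/M_k$ discards all dependence on $n$, after which you would need $r_n^{k}\le C^{k+1}M_k$ uniformly in $n$; for $M_n=n!$ and $k=1$ this says $n\le C^{2}$, which is false. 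The second-difference bound $r_{m+1}/r_m\le B^{2}$ iterated gives $r_n\le B^{2j}r_{n-j}$, hence $r_n^{k}\le B^{k(k+1)}\prod_{j=1}^{k}r_{n-j}=B^{k(k+1)}M_{n-1}/M_{n-1-k}$, so the best constant obtainable this way is $B^{k(k+1)}$, quadratic rather than linear in $k$. The ``delicate balancing'' you allude to cannot repair this: once the $n$-dependence is thrown away by the first tool, no uniform bound is possible, and the second tool alone is one order too crude.

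The paper's proof avoids this obstruction by a different mechanism. It first establishes the endpoint case $a=n$ of (\ref{e389}) for $d=1$, namely $r_n^{n}\le C^{n}M_n$ (this is (\ref{e388}), the same estimate you use to derive $r_{m+1}\le B^{2}r_m$). It then runs a \emph{two-sided} induction on $a$ for fixed $n$: a forward step $a\mapsto a+1$ works whenever $m_n\le C m_{n-a}$, and a backward step $a\mapsto a-1$ works whenever $m_n\ge C m_{n-a}$. Choosing the threshold $a_0$ so that $Cm_{n-a_0-1}\le m_n\le Cm_{n-a_0}$, the forward induction from $a=0,1$ reaches $a_0+1$ and the backward induction from $a=n$ reaches the same point, covering all $a$. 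This bidirectional induction is the missing idea; your plan does not contain it or an equivalent device.
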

\begin{proof}
From the very definition of (lc) and (mg) conditions we can guarantee the existence of $C>0$ such that
\begin{equation}\label{e388}
\left(\frac{M_p}{M_{p-1}}\right)^{p}\le \frac{M_{2 p}}{M_p}\le \frac{B^{2p}(M_p)^2}{M_p}=C^pM_p,
\end{equation}
for every $p\in\N$, and where $B$ is the constant involved in the (mg) condition. Observe that the previous statement can be rewritten in the form
$$\sup_{p\in\N}\frac{M_p/M_{p-1}}{(M_p)^{1/p}}<\infty.$$

In a first step, assume that $d=1$. Equation (\ref{e389}) reads as follows: 
\begin{equation}\label{e403}
\left(\frac{M_n}{M_{n-1}}\right)^a\le C^a\frac{M_n}{M_{n-a}},
\end{equation}
for all $n\in\N$, $n\geq a$. Equation (\ref{e403}) is satisfied for $a=0,1$, and also for $a=n$ in view of (\ref{e388}). For the rest of values, we apply an induction argument with respect to $a$. Let $(m_n)_{n\ge1}$ be the sequence of quotients $m_n:=M_n/M_{n-1}$, for $n\ge 1$, and assume that $(m_n)^a\le C^am_{n-a+1}\cdots m_n$.

Fix $a_0\in\{0,\ldots,n-1\}$ such that $C m_{n-a_0-1}\le m_n\le C m_{n-a_0}$ (with the notation $m_0:=0$). If $a\le a_0$ then $m_n\le C m_{n-a}$ and $(m_n)^{a+1}\le m_n C^a m_{n-a+1}\cdots m_n\le C^{a+1}m_{n-a}\cdots m_n$. Therefore, (\ref{e403}) holds for $a\le a_0$. On the other hand, if $a>a_0$, then
$$(m_n)^{a-1}\le C^a m_{n-a+1}\cdots m_n\frac{1}{m_n}\le C^{a-1}m_{n-a+2}\cdots m_n.$$
Since (\ref{e403}) holds for $a=n$, we have attained (\ref{e403}) for $a>a_0$. 

In the general case $d\ge 1$ one can apply (\ref{e403}) to arrive at 
$$\left(\frac{M_n}{M_{n-1}}\right)^a\le \left(\frac{M_{dn}}{M_{dn-1}}\right)^a\le C^a\frac{M_{dn}}{M_{dn-a}}.$$
\end{proof}

\begin{lemma}
 \label{le:11}
 Let $(M_n)_{n\ge0}$ be an (mg) sequence satisfying the property ($\star$). For every $p,q\in\N$ there exist positive constants $C,D$ such that
 $$
 M_{\lfloor np/q \rfloor}\leq C D^n M_n^{p/q}\quad \textrm{for every}\quad n\in\N_0.
 $$
\end{lemma}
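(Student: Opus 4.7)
The plan is to raise $M_{\lfloor np/q\rfloor}$ to the $q$-th power and use (mg) and ($\star$) in tandem. I would set $k:=\lfloor np/q\rfloor$ and write $qk=np-r$ with $r\in\{0,1,\ldots,q-1\}$, so that, after the $q$-th power, the target becomes a comparison between $M_{qk}$ and $M_n^p$ up to an exponential-in-$n$ factor.

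First, I would iterate ($\star$) in the form $M_{jk}\ge b^{jk}M_{(j-1)k}M_k$ from $j=2$ to $j=q$ to obtain $M_{qk}\ge b_q^{\,k}M_k^{\,q}$ for a constant $b_q>0$ depending only on $b$ and $q$, i.e.
$$M_k^{\,q}\le b_q^{-k}M_{qk}.$$
Next, applying ($\star$) once more to the decomposition $np=qk+r$ yields $M_{np}\ge b^{np}M_{qk}M_r$, hence
$$M_{qk}\le b^{-np}M_r^{-1}M_{np}.$$
Finally, iterating (mg) along the analogous chain $M_{pn}\le B^{pn}M_{(p-1)n}M_n\le\ldots\le B_p^{\,n}M_n^{\,p}$ for a constant $B_p>0$ depending only on $B$ and $p$ and combining the three estimates, I would arrive at
$$M_k^{\,q}\le b_q^{-k}\,b^{-np}\,M_r^{-1}\,B_p^{\,n}\,M_n^{\,p}.$$

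To close, I would observe that $b_q^{-k}\le\bigl(\max(1,b_q^{-p/q})\bigr)^n$ since $k\le np/q$, that $b^{-np}$ and $B_p^{\,n}$ are already exponential in $n$, and that $M_r^{-1}$ is uniformly bounded by $\max_{0\le r<q}M_r^{-1}$, a finite positive constant because every $M_i>0$ and $M_0=1$. Taking $q$-th roots then produces the desired inequality $M_{\lfloor np/q\rfloor}\le CD^n M_n^{p/q}$, and the case $n=0$ is immediate since both sides equal $1$ (after choosing $C\ge 1$).

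The main technical obstacle is the absence of any monotonicity assumption on $(M_n)_{n\ge 0}$: even though $qk\le np$, one cannot simply assert $M_{qk}\le M_{np}$. The second application of ($\star$) in the form $M_{np}\ge b^{np}M_{qk}M_r$ is exactly what bypasses this, trading the small index gap $r<q$ for the harmless bounded factor $M_r^{-1}$; once that step is in place, the rest reduces to bookkeeping of exponential factors.
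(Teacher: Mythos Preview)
Your proof is correct and follows essentially the same route as the paper's: raise to the $q$-th power, use ($\star$) iterated to pass from $M_k^q$ to $M_{qk}$, then from $M_{qk}$ to $M_{np}$, and use (mg) iterated to pass from $M_{np}$ to $M_n^p$. The only difference is that you spell out the middle step $M_{qk}\le b^{-np}M_r^{-1}M_{np}$ via a second application of ($\star$) with the bounded remainder $r$, whereas the paper compresses this into the single line $M_{q\lfloor np/q\rfloor}\le\tilde C\tilde D^nM_{np}$ without comment; your explicit handling of the non-monotone case is exactly what that line requires.
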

\begin{proof}
 By Lemma \ref{lema2prima}
 $$M_{np}\leq B^{np(p+1)/2} M_n^p$$
 and by ($\star$)
 $$M^q_{\lfloor np/q \rfloor}\leq
 b^{-\lfloor np/q \rfloor q(q+1)/2}
 M_{q\lfloor np/q \rfloor}\leq \tilde{C}\tilde{D}^nM_{np}$$ for some $\tilde{C},\tilde{D}>0$.
 Hence finally
 $$
 M_{\lfloor np/q \rfloor}\leq C D^n M_n^{p/q}
 $$
 with $C=\tilde{C}^{1/q}$ and $D=\tilde{D}^{1/q}B^{p(p+1)/2q}$.
\end{proof}

\begin{lemma}
 \label{le:12}
 Let $(M_n)_{n\ge0}$ be an (mg) sequence satisfying property ($\star$). For every $p,q\in\N$ there exist $C,D>0$ such that
 $$
 M_n^{p/q}\leq C D^n M_{\lfloor np/q \rfloor}\quad \textrm{for every}\quad n\in\N_0.
 $$
\end{lemma}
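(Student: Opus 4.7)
The approach is to mirror the proof of Lemma~\ref{le:11} by interchanging the roles of (mg) and ($\star$). Raising the desired inequality to the $q$-th power, it will suffice to produce constants $C_1,D_1>0$ with
$$M_n^p\le C_1 D_1^n M_k^q,\qquad k:=\lfloor np/q\rfloor,$$
after which the lemma follows by taking $q$-th roots and, if necessary, enlarging $C$ so as to absorb the trivial case $n=0$ (recall $M_0=1$).

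First, I would iterate ($\star$) by induction on $p$: from $M_{(j+1)n}\ge b^{(j+1)n}M_{jn}M_n$ one obtains $M_{pn}\ge b^{np(p+1)/2}M_n^p$, hence $M_n^p\le b^{-np(p+1)/2}M_{pn}$. Next, writing $np=qk+r$ with $0\le r<q$, the (mg) condition gives $M_{np}\le B^{np}M_{qk}M_r\le B^{np}\tilde C_q M_{qk}$, where $\tilde C_q:=\max_{0\le r<q}M_r$ depends only on $q$. Applying (mg) once more inductively compresses the index $qk$ down to $k$ through $M_{qk}\le B^{kq(q+1)/2}M_k^q$, and using $k\le np/q$ this is bounded by $B^{np(q+1)/2}M_k^q$.

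Chaining the three bounds yields
$$M_n^p\le \tilde C_q\bigl(b^{-p(p+1)/2}B^{p+p(q+1)/2}\bigr)^{n}M_k^q,$$
which has exactly the desired shape. The main obstacle is nothing more than the bookkeeping of constants: one should note that the inequalities $B\ge 1$ and $b\le 1$ follow automatically from (mg) and ($\star$) applied with $n=1$, $k=0$, so every $B^{(\cdot)n}$ and $b^{-(\cdot)n}$ factor is a genuine exponential in $n$ and can be absorbed into a single constant $D:=\bigl(b^{-p(p+1)/2}B^{p+p(q+1)/2}\bigr)^{1/q}$, with $C:=\tilde C_q^{1/q}$ (increased so that the $n=0$ case also holds).
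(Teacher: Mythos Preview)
Your proof is correct and follows essentially the same route as the paper's: first use ($\star$) iteratively to obtain $M_n^p\le b^{-np(p+1)/2}M_{np}$, then use (mg) to pass from $M_{np}$ to $M_{q\lfloor np/q\rfloor}$ (the paper hides this in the phrase ``for some $C_1,D_1>0$'' while you make the remainder $r$ explicit), then apply the iterated (mg) bound $M_{qk}\le B^{kq(q+1)/2}M_k^q$ (this is the paper's Lemma~\ref{lema2prima}), and finally take $q$-th roots. The only cosmetic difference is that you track the constants more explicitly than the paper does.
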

\begin{proof}
 By ($\star$)
 $$M_n^{p}\leq b^{-np(p+1)/2} M_{np}\leq C_1 D_1^n M_{q\lfloor np/q\rfloor}$$
 for some $C_1,D_1>0$, and by Lemma \ref{lema2prima}
 $$ M_{q\lfloor np/q\rfloor}\leq B^{\lfloor np/q\rfloor q(q+1)/2}M^q_{\lfloor np/q\rfloor}\leq C_2 D_2^n M^q_{\lfloor np/q\rfloor}$$
 for some $C_2,D_2>0$.
 
 Hence
 $$
 M_n^{p/q}\leq C D^n M_{\lfloor np/q \rfloor}\quad \textrm{for every}\quad n\in\N_0
 $$
 with $C=(C_1 C_2)^{1/q}$ and $D=(D_1D_2)^{1/q}$.
\end{proof}

\end{document}